\documentclass[11pt]{article}
\usepackage{amsthm,geometry,amssymb,amsmath,cite,setspace,hyperref,float,enumerate,algorithm2e}
\geometry{a4paper,left=2cm,right=2cm, top=2cm, bottom=2cm}

\newtheorem{theorem}{Theorem}
\newtheorem{lemma}[theorem]{Lemma}

\newcommand{\kr}{\mbox{\tiny $\bigcirc$}}

\allowdisplaybreaks
\usepackage{lineno}

\title{Reconfiguring dominating sets in minor-closed graph classes}
\author{Dieter Rautenbach and Johannes Redl\\[3mm]
\normalsize Institute of Optimization and Operations Research, Ulm University, Germany\\
\texttt{$\{$dieter.rautenbach,johannes.redl$\}$@uni-ulm.de}}
\date{}

\begin{document}
\maketitle
\onehalfspace

\begin{abstract}
For a graph $G$, two dominating sets $D$ and $D'$ in $G$,
and a non-negative integer $k$,
the set $D$ is said to {\it $k$-transform} to $D'$
if there is a sequence $D_0,\ldots,D_\ell$ of dominating sets in $G$ such that 
$D=D_0$, 
$D'=D_\ell$, 
$|D_i|\leq k$ for every $i\in \{ 0,1,\ldots,\ell\}$,
and $D_i$ arises from $D_{i-1}$ 
by adding or removing one vertex
for every $i\in \{ 1,\ldots,\ell\}$.
We prove that there 
is some positive constant $c$
and there are toroidal graphs $G$ 
of arbitrarily large order $n$,
and two minimum dominating sets $D$ and $D'$ in $G$
such that $D$ $k$-transforms to $D'$
only if $k\geq \max\{ |D|,|D'|\}+c\sqrt{n}$.
Conversely, 
for every hereditary class ${\cal G}$ 
that has balanced separators of order $n\mapsto n^\alpha$
for some $\alpha<1$,
we prove that there is some positive constant $C$ such that,
if $G$ is a graph in ${\cal G}$ of order $n$, 
and $D$ and $D'$ are two dominating sets in $G$,
then 
$D$ $k$-transforms to $D'$
for $k=\max\{ |D|,|D'|\}+\lfloor Cn^\alpha\rfloor$.\\[2mm]
{\bf Keywords:} 
dominating set; 
reconfiguration; 
toroidal graph;
minor-closed graph class\\[2mm]
{\bf MSC 2020 classification:} 05C69
\end{abstract}

\section{Introduction}

We consider finite, simple, and undirected graphs, and use standard notation and terminology.
Let $G$ be a graph. 
A set $D$ of vertices of $G$ is a {\it dominating set in} $G$ 
if every vertex of $G$ belongs to $D$ or has a neighbor in $D$.
Let $D$ and $D'$ be two dominating sets in $G$.
The dominating sets $D$ and $D'$ are {\it adjacent} 
if $|D\setminus D'|+|D'\setminus D|=1$,
that is, if $D'$ arises from $D$ by adding or removing one vertex.
Let $k$ be a positive integer. 
We say that $D$ {\it $k$-transforms to} $D'$, and write 
$D\stackrel{k}{\longleftrightarrow}D'$,
if there is a sequence $D_0,\ldots,D_\ell$ of dominating sets in $G$ such that 
$D=D_0$, 
$D'=D_\ell$, 
$|D_i|\leq k$ for every $i\in \{ 0,1,\ldots,\ell\}$,
and $D_{i-1}$ is adjacent to $D_i$ for every $i\in \{ 1,\ldots,\ell\}$.
Let $D_k(G)$ be the graph whose vertices are 
the dominating sets in $G$ that are of order at most $k$,
and whose edges are defined by the above adjacency between dominating sets.

The structure and, in particular, reachability, connectivity, and distance problems in $D_k(G)$ 
have been studied in \cite{alfakl,hase,sumoni}
and --- with a focus on algorithmic and complexity results ---
in \cite{haitnionsute,lomoparasa}.
In \cite{ni} a general survey on reconfiguration problems for several types of sets of vertices in a graph is given.
If $\gamma(G)$ and $\Gamma(G)$ denote the minimum and maximum order of dominating sets in $G$
that are minimal with respect to inclusion,
respectively, then it is easy to see that $D_{\Gamma(G)+\gamma(G)}(G)$ is always connected.
Answering a question of Haas and Seyffarth \cite{hase}, 
Suzuki et al.~\cite{sumoni} construct planar graphs $G$
for which $D_{\Gamma(G)+1}(G)$ is disconnected.
As they point out, it is unknown whether $D_{\Gamma(G)+2}(G)$ is connected for every graph $G$.

Inspired by the cited research and this open problem,
we take a slightly different point of view, 
considering 
--- roughly speaking --- how many additional vertices must
be allowed in order to transform one given dominating set to another given one.
More precisely, 
if $D$ and $D'$ are dominating sets in a graph $G$,
then let $\partial\gamma_G(D,D')$ equal $k-\max\{ |D|,|D'|\}$,
where $k$ is the smallest positive integer with $D\stackrel{k}{\longleftrightarrow}D'$.
In particular, 
$$D\longleftarrow\hspace{-0.2em}\stackrel{\max\{ |D|,|D'|\}+\partial\gamma_G(D,D')}{-\hspace{-0.4em}-\hspace{-0.4em}-\hspace{-0.4em}-\hspace{-0.4em}-\hspace{-0.4em}-\hspace{-0.4em}-\hspace{-0.4em}-\hspace{-0.4em}-\hspace{-0.4em}-\hspace{-0.4em}-\hspace{-0.4em}-\hspace{-0.4em}-\hspace{-0.4em}-\hspace{-0.4em}-\hspace{-0.4em}-}\hspace{-0.2em}\longrightarrow D',$$
that is, allowing $\partial\gamma_G(D,D')$ more vertices than 
contained in the larger of the two dominating sets,
one can transform $D$ to $D'$.
It is easy to see that 
$$\partial\gamma_G(D,D')\leq \min\left\{ \gamma(G),\frac{n(G)}{2}\right\}$$
for every graph $G$ of order $n(G)$.
It seems an interesting problem to determine the best possible upper bound on $\partial\gamma_G(D,D')$
in terms of the order $n(G)$;
for general graphs as well as for graphs from restricted graph classes.
Known results \cite{haitnionsute} imply, for instance, 
that $\partial\gamma_G(D,D')\leq 1$
whenever $G$ is a cograph, a forest, or an interval graph.
The results of the present paper were obtained 
wondering whether $\partial\gamma_G(D,D')$
can be upper bounded in terms of the maximum degree $\Delta(G)$ of $G$.

Our first result shows that this is not possible.

\begin{theorem}\label{theorem1}
There is a positive constant $c$ such that, 
for every positive integer $n$, 
there is a $4$-regular graph $G$ of order at least $n$
that can be embedded on the torus, 
and there are two dominating sets $D$ and $D'$ of $G$, 
both of order $n(G)/5$,
such that 
$$\partial\gamma_G(D,D')\geq c\sqrt{n(G)}.$$
\end{theorem}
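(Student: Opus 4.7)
The plan is to exhibit an explicit toroidal construction and derive the lower bound via an isoperimetric obstruction. For a given $n$, choose $m=\lceil\sqrt{n}/5\rceil$ and set $G = C_{5m}\,\Box\,C_{5m}$, the Cartesian product of two cycles of length $5m$; this is a $4$-regular graph embedded on the torus with $n(G) = 25m^2 \geq n$. As $D$ and $D'$ I would take the two classical perfect dominating codes
\[
D = \{(x,y) : x+2y \equiv 0 \pmod 5\} \quad\text{and}\quad D' = \{(x,y) : x+2y \equiv 1 \pmod 5\},
\]
both of cardinality $n(G)/5$, in each of which every vertex of $G$ has exactly one dominator in its closed neighbourhood.

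Given any reconfiguration $D=D_0,D_1,\ldots,D_\ell=D'$, I would decompose $D_t = (D\setminus R_t)\cup A_t$ with $R_t = D\setminus D_t$ and $A_t = D_t \setminus D$, so that $|D_t|-|D| = |A_t|-|R_t|$. The key constraint forced by the perfect-code property of $D$ is the following: for every $v\in R_t$ and every $u\in N[v]$ the unique $D$-dominator of $u$ is $v$ itself, so $u$ is dominated by $D_t$ only if $N[u]\cap A_t\neq\emptyset$. Consequently, at every time $t$ the set $A_t$ must dominate $N[R_t]$, a set of cardinality exactly $5|R_t|$ because the closed neighbourhoods $N[v]$ with $v\in D$ partition $V(G)$. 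Since $|R_t|$ changes by at most $1$ per step and grows from $0$ to $|D|$, there is a time $t^*$ with $|R_{t^*}| = \lfloor|D|/2\rfloor$; at that moment $S^* := N[R_{t^*}]$ has cardinality $\approx n(G)/2$, and the standard vertex-isoperimetric inequality on the discrete torus gives $|\partial S^*| = \Omega(\sqrt{n(G)})$.

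The main step is to convert this boundary estimate into a lower bound $|A_{t^*}|-|R_{t^*}| = \Omega(\sqrt{n(G)})$. The trivial inequality $|A_{t^*}|\geq|R_{t^*}|$ (since each $a$ dominates at most five vertices and $|S^*|=5|R_{t^*}|$) is tight only if the balls $\{N[a]:a\in A_{t^*}\}$ tile $S^*$; locally this would force $A_{t^*}$ to be contained in one of the four non-zero shifts of the perfect code, but along $\partial S^*$ the tiles of any single shift get cut, and either including the centre of a cut tile (which wastes coverage outside $S^*$) or switching to a different shift across an interface (which forces double-coverage) contributes to $|A_{t^*}|-|R_{t^*}|$. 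The hard part will be a clean combinatorial proof that this overhead is unavoidable and of order $|\partial S^*|$, naturally phrased as an LP-duality argument in which one augments the canonical fractional packing $y_u=1/5$ by extra weight on vertices near $\partial S^*$, exploiting the slack in the constraints for those $a\in V\setminus D$ whose closed neighbourhoods straddle $\partial S^*$. Combined with the isoperimetric estimate, this yields $|A_{t^*}|-|R_{t^*}|\geq c\sqrt{n(G)}$, and hence $\partial\gamma_G(D,D')\geq c\sqrt{n(G)}$, as required.
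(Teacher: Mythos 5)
Your overall architecture is the same as the paper's: two disjoint perfect codes on a $4$-regular torus, an intermediate-value argument producing a ``half-transformed'' dominating set $D_{t^*}$, the Bollob\'{a}s--Leader isoperimetric inequality to force a boundary of length $\Omega(\sqrt{n(G)})$ between the region still dominated the old way and the region already switched, and finally a local ``waste'' argument near that boundary. The set-up through the isoperimetric estimate is sound (the observation that $A_t$ must dominate all of $N[R_t]$, of size exactly $5|R_t|$, is correct and is a clean way to phrase what the paper encodes via the types of pairs in its auxiliary graph $H$). However, the step you label ``the hard part'' is not a minor finishing touch --- it is the entire content of the theorem, and as written it is a restatement of what must be proved rather than a proof. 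The trivial packing bound $|A_{t^*}|\geq|R_{t^*}|$ carries no information about the boundary; to extract $\Omega(|\partial S^*|)$ excess you must show that within bounded distance of each of $\Omega(\sqrt{n(G)})$ pairwise far-apart boundary locations there is a vertex of $D_{t^*}$ whose closed neighbourhood meets that of another vertex of $D_{t^*}$, and that these witnesses can be chosen $3$-separated so their contributions to the double count $\sum_{u\in V(G)}|N[u]\cap D_{t^*}|\geq n(G)+(\text{waste})=5|D_{t^*}|$ do not collapse. This is a genuine combinatorial fact about how a dominating set can behave across the interface between two incompatible cosets of the perfect code; the paper proves it (its Lemma~2) by an explicit six-case analysis of which code vertices are present or absent around a boundary pair, and your LP-duality sketch does not yet identify the dual weights or verify feasibility, so the claimed inequality $|A_{t^*}|-|R_{t^*}|\geq c\sqrt{n(G)}$ remains unestablished. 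A caution for when you do carry it out: a single tile $N[v]$ with $v\in R_{t^*}$ need not generate waste on its own, because a ball $N[a]$ with $a\in N(v)$ may spill into a neighbouring tile of $S^*$ rather than outside $S^*$; waste is forced only at the interface, which is why the case analysis there is unavoidable.

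Two smaller points. First, Bollob\'{a}s--Leader is stated for the discrete torus $\mathbb{Z}_k^2$ with $k$ even (the paper arranges $k\equiv 0 \bmod 4$ for exactly this reason); your side length $5m$ is odd whenever $m$ is odd, so you should take $m$ even or cite a version of the inequality valid for odd side length. Second, the theorem asks for $D$ and $D'$ of order $n(G)/5$, which your codes satisfy, but you should also record that every dominating set of a $4$-regular graph has size at least $n(G)/5$, so that the excess $|D_{t^*}|-n(G)/5$ really is a lower bound on $\partial\gamma_G(D,D')$ and not absorbed by slack in $\max\{|D|,|D'|\}$.
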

Our second result shows that the lower bound in Theorem \ref{theorem1}
has the right order of magnitude.
Rather than considering only graphs embedded on the torus, 
we consider graphs with sublinear balanced separators.
In order to phrase our second result, we need some more definitions:
A graph $G$ has a {\it balanced separator of order $k$} 
if there is a set $S$ of at most $k$ vertices of $G$ 
as well as a partition of the vertex set $V(G)$ of $G$ into three sets
$S$, $A$, and $B$
such that $|A|,|B|\leq 2n(G)/3$, and $G$ contains no edge between $A$ and $B$.
A hereditary class ${\cal G}$ of graphs has {\it balanced separators of order $n\mapsto n^\alpha$} 
if there is some positive constant $c$
such that every graph $G$ in ${\cal G}$
has a balanced separator of order $cn(G)^\alpha$.
It is known that minor-closed graph classes \cite{kare}
such as planar graphs \cite{lita},
toroidal graphs,
and graphs of bounded genus \cite{gihuta}
have balanced separators of order $n\mapsto \sqrt{n}$.

Here is our second main result.

\begin{theorem}\label{theorem2}
Let ${\cal G}$ be a hereditary class of graphs that has 
balanced separators of order $n\mapsto n^\alpha$
for some $\alpha<1$.
There is some positive constant $C$ such that,
if $G$ is a graph in ${\cal G}$, and $D$ and $D'$ are two dominating sets in $G$,
then 
$$\partial\gamma_G(D,D')\leq C n(G)^\alpha.$$
\end{theorem}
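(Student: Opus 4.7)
The plan is to use strong induction on $n=n(G)$, leveraging a balanced separator to recurse on strictly smaller induced subgraphs. Fix $c$ so that every $G\in{\cal G}$ admits a partition $V(G)=A\cup S\cup B$ with $|S|\leq cn^\alpha$, $|A|,|B|\leq 2n/3$, and no edges between $A$ and $B$. For $n$ below a suitable threshold $n_0$, the trivial bound $\partial\gamma_G(D,D')\leq n/2$ is absorbed by choosing $C$ large, so suppose $n\geq n_0$. I would execute the transformation from $D$ to $D'$ in four stages: first add $S\setminus D$ one vertex at a time (reaching $D\cup S$), then reconfigure the $A$-side to match $D'$, then reconfigure the $B$-side, and finally remove $S\setminus D'$. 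The key structural point is that once $S$ is present in the current set, the absence of $A$--$B$ edges decouples the two sides, so each can be reconfigured independently.

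For the $A$-phase I would apply the inductive hypothesis to $G_A:=G[A\cup S]$ (which lies in ${\cal G}$ by heredity and is strictly smaller than $G$ once $n\geq n_0$, since $|B|\geq n/3-cn^\alpha>0$), with initial and target dominating sets $X:=(D\cap A)\cup S$ and $Y:=(D'\cap A)\cup S$. This yields a sequence $X=X_0,X_1,\ldots,X_\ell=Y$ of dominating sets of $G_A$ with $|X_i|\leq \max\{|X|,|Y|\}+Cn(G_A)^\alpha$. The complication is that the $X_i$ need not contain $S$, whereas for the lifted set $X_i\cup(D\cap B)$ to dominate $G$ throughout the phase, the vertices of $S$ must be available to dominate $B$. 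I would therefore replace each $X_i$ by $X_i^\star:=X_i\cup S$: consecutive pairs either still differ by one vertex (when the toggled vertex lies outside $S$) or coincide (when it lies in $S$) and can be dropped, producing a valid reconfiguration in $G_A$ whose intermediate sets all contain $S$ and satisfy $|X_i^\star|\leq|X_i|+|S|$. The $B$-phase is handled symmetrically in $G_B:=G[B\cup S]$.

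The delicate bookkeeping step is to bound intermediate sizes in $G$ by $\max\{|D|,|D'|\}+O(n^\alpha)$ rather than by $|D|+|D'|$. During the $A$-phase the set in $G$ has size $|X_i\cap A|+|S|+|D\cap B|\leq \max\{|D\cap A|,|D'\cap A|\}+|D\cap B|+2|S|+Cn(G_A)^\alpha$. When $|D\cap A|\geq|D'\cap A|$ this is at most $|D|+2cn^\alpha+Cn(G_A)^\alpha$, so I would process $A$ first with the $B$-side pinned to $D\cap B$, and then $B$ with the $A$-side pinned to $D'\cap A$; a short case analysis on the sign of $|D\cap B|-|D'\cap B|$ shows that the $B$-phase then obeys the same bound with $\max\{|D|,|D'|\}$ in place of $|D|$. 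When $|D'\cap A|>|D\cap A|$ I would swap the order, processing $B$ first (with $A$-side pinned to $D\cap A$) and $A$ second (with $B$-side pinned to $D'\cap B$), and an analogous computation gives the same guarantee.

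Combining the four stages, every intermediate set has size at most $\max\{|D|,|D'|\}+2cn^\alpha+C\max\{n(G_A),n(G_B)\}^\alpha$. Since $n(G_A),n(G_B)\leq 2n/3+cn^\alpha\leq \lambda n$ for some $\lambda<1$ once $n\geq n_0$, the induction closes whenever $2c+C\lambda^\alpha\leq C$, i.e., whenever $C\geq 2c/(1-\lambda^\alpha)$, and I can pick $C$ large enough to also dominate the finitely many base cases. The main obstacle I expect is the ``forced preservation of $S$'' described above: the union-with-$S$ trick is the crux, and one must verify both that the lifted sequence remains a valid reconfiguration inside $G_A$ and that its image in $G$, padded with the fixed $B$-part, is dominating at every step. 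The order-of-phases choice is a secondary subtlety needed to replace $|D|+|D'|$ by $\max\{|D|,|D'|\}$ in the final estimate.
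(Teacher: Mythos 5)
Your proposal is correct, but it takes a genuinely different route from the paper. The paper first reduces to the case where $D'$ is a \emph{minimum} dominating set (via transitivity through a common intermediate), then builds the entire recursive separator decomposition as a full binary tree $T$ in one shot, associates to every root-to-leaf path $P$ a hybrid ``special dominating set'' $D(P)$ consisting of all separators along $P$ together with $D'$ on the left of $P$ and $D$ on the right, bounds $|D(P)|\leq |D|+2c_3n^\alpha$ using the minimality of $D'$ (this is their Lemma~\ref{lemma3}, the analogue of your bookkeeping step), and finally sweeps through all root-to-leaf paths in lexicographic order, exchanging one subtree's worth of separator vertices at a time. Your argument replaces all of this by a single balanced separator plus strong induction: the role of ``keeping the separators on the current path in the dominating set'' is played by your $\cup S$ padding, and the role of the minimality of $D'$ is played by your choice of which side ($A$ or $B$) to reconfigure first according to the sign of $|D\cap A|-|D'\cap A|$, which is what lets you charge each phase to $\max\{|D|,|D'|\}$ rather than $|D|+|D'|$. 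I checked the two case analyses (order of phases crossed with the sign of $|D\cap B|-|D'\cap B|$) and the recursion $C\geq 2c/(1-\lambda^\alpha)$ with $n(G_A),n(G_B)\leq 2n/3+cn^\alpha\leq\lambda n$; both close correctly, and the lifted sequences are valid reconfigurations of dominating sets in $G$. What each approach buys: yours avoids the reduction to a minimum $D'$ entirely and is arguably more modular, while the paper's global tree makes the total number of reconfiguration steps explicit and keeps all the size accounting in a single lemma. Both arguments implicitly use $\alpha>0$ (you need $\lambda^\alpha<1$; the paper needs $(2/3)^\alpha<1$ for its geometric series), so neither covers the degenerate case $\alpha\leq 0$, and this is not a defect specific to your write-up.
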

The proofs of our two results are given in the following two sections.

\section{Proof of Theorem \ref{theorem1}}

The $4$-regular graph $G$ that we construct for this result 
arises by applying suitable vertex identifications
to a sufficiently large subgraph of the infinite grid graph 
$\mathbb{Z}^2$ illustrated in Figure \ref{fig1}.
In this figure we also illustrate two dominating sets $D_\Box$ and $D_{\kr}$ of $\mathbb{Z}^2$,
one indicated by squares $\Box$ 
and the second indicated by circles ${\kr}$.
The graph $G$ will be constructed in such a way that 
$D=V(G)\cap D_\Box$
and 
$D'=V(G)\cap D_{\kr}$
are minimum dominating sets of $G$.
In fact, since $G$ is $4$-regular, 
every dominating set in $G$ contains at least $|D|=|D'|=n(G)/5$ vertices.

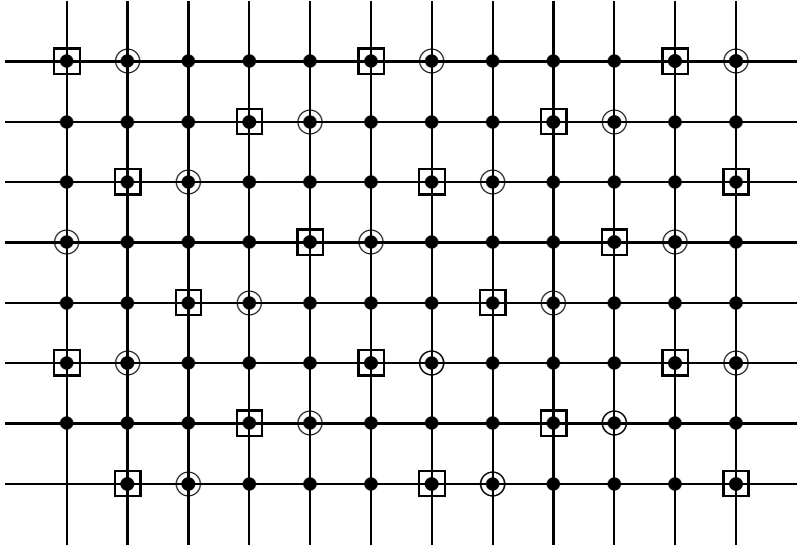
\begin{figure}[h]
\begin{center}
\unitlength 0.8mm 
\linethickness{0.4pt}
\ifx\plotpoint\undefined\newsavebox{\plotpoint}\fi 
\begin{picture}(130,90)(0,0)
\put(20,10){\circle*{2}}
\put(10,20){\circle*{2}}
\put(20,10){\circle*{2}}
\put(20,30){\circle*{2}}
\put(10,40){\circle*{2}}
\put(10,50){\circle*{2}}
\put(10,60){\circle*{2}}
\put(10,70){\circle*{2}}
\put(10,80){\circle*{2}}
\put(30,10){\circle*{2}}
\put(20,20){\circle*{2}}
\put(20,30){\circle*{2}}
\put(20,40){\circle*{2}}
\put(20,50){\circle*{2}}
\put(20,60){\circle*{2}}
\put(20,70){\circle*{2}}
\put(20,80){\circle*{2}}
\put(30,10){\circle*{2}}
\put(30,20){\circle*{2}}
\put(30,30){\circle*{2}}
\put(30,40){\circle*{2}}
\put(10,30){\circle*{2}}
\put(30,50){\circle*{2}}
\put(30,60){\circle*{2}}
\put(30,70){\circle*{2}}
\put(30,80){\circle*{2}}
\put(40,10){\circle*{2}}
\put(40,20){\circle*{2}}
\put(40,30){\circle*{2}}
\put(40,40){\circle*{2}}
\put(40,50){\circle*{2}}
\put(40,60){\circle*{2}}
\put(40,70){\circle*{2}}
\put(40,80){\circle*{2}}
\put(50,10){\circle*{2}}
\put(50,20){\circle*{2}}
\put(50,30){\circle*{2}}
\put(50,40){\circle*{2}}
\put(50,50){\circle*{2}}
\put(50,60){\circle*{2}}
\put(40,70){\circle*{2}}
\put(50,80){\circle*{2}}
\put(60,10){\circle*{2}}
\put(60,20){\circle*{2}}
\put(60,30){\circle*{2}}
\put(60,40){\circle*{2}}
\put(50,50){\circle*{2}}
\put(60,60){\circle*{2}}
\put(50,70){\circle*{2}}
\put(60,80){\circle*{2}}
\put(90,70){\circle*{2}}
\put(110,80){\circle*{2}}
\put(70,10){\circle*{2}}
\put(70,20){\circle*{2}}
\put(60,30){\circle*{2}}
\put(70,40){\circle*{2}}
\put(60,50){\circle*{2}}
\put(70,60){\circle*{2}}
\put(100,50){\circle*{2}}
\put(70,70){\circle*{2}}
\put(60,70){\circle*{2}}
\put(70,80){\circle*{2}}
\put(100,70){\circle*{2}}
\put(120,80){\circle*{2}}
\put(70,10){\circle*{2}}
\put(80,20){\circle*{2}}
\put(70,30){\circle*{2}}
\put(80,40){\circle*{2}}
\put(110,30){\circle*{2}}
\put(80,50){\circle*{2}}
\put(70,50){\circle*{2}}
\put(80,60){\circle*{2}}
\put(110,50){\circle*{2}}
\put(80,70){\circle*{2}}
\put(80,80){\circle*{2}}
\put(80,10){\circle*{2}}
\put(90,20){\circle*{2}}
\put(120,10){\circle*{2}}
\put(90,30){\circle*{2}}
\put(80,30){\circle*{2}}
\put(90,40){\circle*{2}}
\put(120,30){\circle*{2}}
\put(90,50){\circle*{2}}
\put(90,60){\circle*{2}}
\put(90,70){\circle*{2}}
\put(110,80){\circle*{2}}
\put(90,80){\circle*{2}}
\put(100,10){\circle*{2}}
\put(90,10){\circle*{2}}
\put(100,20){\circle*{2}}
\put(100,30){\circle*{2}}
\put(100,40){\circle*{2}}
\put(100,50){\circle*{2}}
\put(100,60){\circle*{2}}
\put(100,70){\circle*{2}}
\put(120,80){\circle*{2}}
\put(100,80){\circle*{2}}
\put(110,10){\circle*{2}}
\put(110,20){\circle*{2}}
\put(110,30){\circle*{2}}
\put(110,40){\circle*{2}}
\put(110,50){\circle*{2}}
\put(110,60){\circle*{2}}
\put(110,70){\circle*{2}}
\put(110,80){\circle*{2}}
\put(120,10){\circle*{2}}
\put(120,20){\circle*{2}}
\put(120,30){\circle*{2}}
\put(120,40){\circle*{2}}
\put(120,50){\circle*{2}}
\put(120,60){\circle*{2}}
\put(120,70){\circle*{2}}
\put(120,80){\circle*{2}}
\put(0,10){\line(1,0){10}}
\put(0,20){\line(1,0){10}}
\put(10,10){\line(1,0){10}}
\put(0,30){\line(1,0){10}}
\put(0,40){\line(1,0){10}}
\put(0,50){\line(1,0){10}}
\put(0,60){\line(1,0){10}}
\put(0,70){\line(1,0){10}}
\put(0,80){\line(1,0){10}}
\put(20,10){\line(1,0){10}}
\put(10,20){\line(1,0){10}}
\put(10,30){\line(1,0){10}}
\put(10,40){\line(1,0){10}}
\put(10,50){\line(1,0){10}}
\put(10,60){\line(1,0){10}}
\put(10,70){\line(1,0){10}}
\put(10,80){\line(1,0){10}}
\put(30,10){\line(1,0){10}}
\put(20,20){\line(1,0){10}}
\put(20,30){\line(1,0){10}}
\put(20,40){\line(1,0){10}}
\put(20,50){\line(1,0){10}}
\put(20,60){\line(1,0){10}}
\put(20,70){\line(1,0){10}}
\put(20,80){\line(1,0){10}}
\put(30,10){\line(1,0){10}}
\put(30,20){\line(1,0){10}}
\put(30,30){\line(1,0){10}}
\put(30,40){\line(1,0){10}}
\put(30,50){\line(1,0){10}}
\put(30,60){\line(1,0){10}}
\put(30,70){\line(1,0){10}}
\put(30,80){\line(1,0){10}}
\put(40,10){\line(1,0){10}}
\put(40,20){\line(1,0){10}}
\put(40,30){\line(1,0){10}}
\put(40,40){\line(1,0){10}}
\put(40,50){\line(1,0){10}}
\put(40,60){\line(1,0){10}}
\put(40,70){\line(1,0){10}}
\put(40,80){\line(1,0){10}}
\put(50,10){\line(1,0){10}}
\put(50,20){\line(1,0){10}}
\put(50,30){\line(1,0){10}}
\put(50,40){\line(1,0){10}}
\put(50,50){\line(1,0){10}}
\put(50,60){\line(1,0){10}}
\put(40,70){\line(1,0){10}}
\put(50,80){\line(1,0){10}}
\put(60,10){\line(1,0){10}}
\put(60,20){\line(1,0){10}}
\put(60,30){\line(1,0){10}}
\put(60,40){\line(1,0){10}}
\put(50,50){\line(1,0){10}}
\put(60,60){\line(1,0){10}}
\put(60,70){\line(1,0){10}}
\put(50,70){\line(1,0){10}}
\put(60,80){\line(1,0){10}}
\put(90,70){\line(1,0){10}}
\put(110,80){\line(1,0){10}}
\put(70,10){\line(1,0){10}}
\put(70,20){\line(1,0){10}}
\put(60,30){\line(1,0){10}}
\put(70,40){\line(1,0){10}}
\put(70,50){\line(1,0){10}}
\put(60,50){\line(1,0){10}}
\put(70,60){\line(1,0){10}}
\put(100,50){\line(1,0){10}}
\put(70,70){\line(1,0){10}}
\put(70,80){\line(1,0){10}}
\put(70,10){\line(1,0){10}}
\put(80,20){\line(1,0){10}}
\put(80,30){\line(1,0){10}}
\put(70,30){\line(1,0){10}}
\put(80,40){\line(1,0){10}}
\put(110,30){\line(1,0){10}}
\put(80,50){\line(1,0){10}}
\put(80,60){\line(1,0){10}}
\put(80,70){\line(1,0){10}}
\put(80,80){\line(1,0){10}}
\put(90,10){\line(1,0){10}}
\put(80,10){\line(1,0){10}}
\put(90,20){\line(1,0){10}}
\put(120,10){\line(1,0){10}}
\put(90,30){\line(1,0){10}}
\put(90,40){\line(1,0){10}}
\put(90,50){\line(1,0){10}}
\put(90,60){\line(1,0){10}}
\put(90,70){\line(1,0){10}}
\put(110,80){\line(1,0){10}}
\put(90,80){\line(1,0){10}}
\put(100,10){\line(1,0){10}}
\put(100,20){\line(1,0){10}}
\put(100,30){\line(1,0){10}}
\put(100,40){\line(1,0){10}}
\put(100,50){\line(1,0){10}}
\put(100,60){\line(1,0){10}}
\put(100,70){\line(1,0){10}}
\put(100,80){\line(1,0){10}}
\put(110,10){\line(1,0){10}}
\put(110,20){\line(1,0){10}}
\put(110,30){\line(1,0){10}}
\put(110,40){\line(1,0){10}}
\put(110,50){\line(1,0){10}}
\put(110,60){\line(1,0){10}}
\put(110,70){\line(1,0){10}}
\put(110,80){\line(1,0){10}}
\put(120,10){\line(1,0){10}}
\put(120,20){\line(1,0){10}}
\put(120,30){\line(1,0){10}}
\put(120,40){\line(1,0){10}}
\put(120,50){\line(1,0){10}}
\put(120,60){\line(1,0){10}}
\put(120,70){\line(1,0){10}}
\put(120,80){\line(1,0){10}}
\put(10,0){\line(0,1){10}}
\put(10,10){\line(0,1){10}}
\put(10,20){\line(0,1){10}}
\put(10,30){\line(0,1){10}}
\put(10,40){\line(0,1){10}}
\put(10,50){\line(0,1){10}}
\put(10,60){\line(0,1){10}}
\put(10,70){\line(0,1){10}}
\put(10,80){\line(0,1){10}}
\put(20,0){\line(0,1){10}}
\put(20,10){\line(0,1){10}}
\put(20,20){\line(0,1){10}}
\put(20,30){\line(0,1){10}}
\put(20,40){\line(0,1){10}}
\put(20,50){\line(0,1){10}}
\put(20,60){\line(0,1){10}}
\put(20,70){\line(0,1){10}}
\put(20,80){\line(0,1){10}}
\put(30,0){\line(0,1){10}}
\put(30,10){\line(0,1){10}}
\put(30,20){\line(0,1){10}}
\put(30,30){\line(0,1){10}}
\put(30,40){\line(0,1){10}}
\put(30,50){\line(0,1){10}}
\put(30,60){\line(0,1){10}}
\put(30,70){\line(0,1){10}}
\put(30,80){\line(0,1){10}}
\put(40,0){\line(0,1){10}}
\put(40,10){\line(0,1){10}}
\put(40,20){\line(0,1){10}}
\put(40,30){\line(0,1){10}}
\put(40,40){\line(0,1){10}}
\put(40,50){\line(0,1){10}}
\put(40,60){\line(0,1){10}}
\put(40,70){\line(0,1){10}}
\put(40,80){\line(0,1){10}}
\put(50,0){\line(0,1){10}}
\put(50,10){\line(0,1){10}}
\put(50,20){\line(0,1){10}}
\put(50,30){\line(0,1){10}}
\put(50,40){\line(0,1){10}}
\put(50,50){\line(0,1){10}}
\put(50,60){\line(0,1){10}}
\put(50,70){\line(0,1){10}}
\put(50,80){\line(0,1){10}}
\put(60,0){\line(0,1){10}}
\put(60,10){\line(0,1){10}}
\put(60,20){\line(0,1){10}}
\put(60,30){\line(0,1){10}}
\put(60,40){\line(0,1){10}}
\put(60,50){\line(0,1){10}}
\put(60,60){\line(0,1){10}}
\put(60,70){\line(0,1){10}}
\put(60,80){\line(0,1){10}}
\put(70,0){\line(0,1){10}}
\put(70,10){\line(0,1){10}}
\put(70,20){\line(0,1){10}}
\put(70,30){\line(0,1){10}}
\put(70,40){\line(0,1){10}}
\put(70,50){\line(0,1){10}}
\put(70,60){\line(0,1){10}}
\put(70,70){\line(0,1){10}}
\put(70,80){\line(0,1){10}}
\put(80,0){\line(0,1){10}}
\put(80,10){\line(0,1){10}}
\put(80,20){\line(0,1){10}}
\put(80,30){\line(0,1){10}}
\put(80,40){\line(0,1){10}}
\put(80,50){\line(0,1){10}}
\put(80,60){\line(0,1){10}}
\put(80,70){\line(0,1){10}}
\put(80,80){\line(0,1){10}}
\put(90,0){\line(0,1){10}}
\put(90,10){\line(0,1){10}}
\put(90,20){\line(0,1){10}}
\put(90,30){\line(0,1){10}}
\put(90,40){\line(0,1){10}}
\put(90,50){\line(0,1){10}}
\put(90,60){\line(0,1){10}}
\put(90,70){\line(0,1){10}}
\put(90,80){\line(0,1){10}}
\put(100,0){\line(0,1){10}}
\put(100,10){\line(0,1){10}}
\put(100,20){\line(0,1){10}}
\put(100,30){\line(0,1){10}}
\put(100,40){\line(0,1){10}}
\put(100,50){\line(0,1){10}}
\put(100,60){\line(0,1){10}}
\put(100,70){\line(0,1){10}}
\put(100,80){\line(0,1){10}}
\put(110,0){\line(0,1){10}}
\put(110,10){\line(0,1){10}}
\put(110,20){\line(0,1){10}}
\put(110,30){\line(0,1){10}}
\put(110,40){\line(0,1){10}}
\put(110,50){\line(0,1){10}}
\put(110,60){\line(0,1){10}}
\put(110,70){\line(0,1){10}}
\put(110,80){\line(0,1){10}}
\put(120,0){\line(0,1){10}}
\put(120,10){\line(0,1){10}}
\put(120,20){\line(0,1){10}}
\put(120,30){\line(0,1){10}}
\put(120,40){\line(0,1){10}}
\put(120,50){\line(0,1){10}}
\put(120,60){\line(0,1){10}}
\put(120,70){\line(0,1){10}}
\put(120,80){\line(0,1){10}}
\put(18,8){\framebox(4,4)[cc]{}}
\put(38,18){\framebox(4,4)[cc]{}}
\put(28,38){\framebox(4,4)[cc]{}}
\put(8,28){\framebox(4,4)[cc]{}}
\put(48,48){\framebox(4,4)[cc]{}}
\put(68,58){\framebox(4,4)[cc]{}}
\put(98,48){\framebox(4,4)[cc]{}}
\put(118,58){\framebox(4,4)[cc]{}}
\put(18,58){\framebox(4,4)[cc]{}}
\put(8,78){\framebox(4,4)[cc]{}}
\put(58,28){\framebox(4,4)[cc]{}}
\put(68,8){\framebox(4,4)[cc]{}}
\put(88,18){\framebox(4,4)[cc]{}}
\put(118,8){\framebox(4,4)[cc]{}}
\put(38,68){\framebox(4,4)[cc]{}}
\put(58,78){\framebox(4,4)[cc]{}}
\put(88,68){\framebox(4,4)[cc]{}}
\put(108,78){\framebox(4,4)[cc]{}}
\put(58,28){\framebox(4,4)[cc]{}}
\put(68,8){\framebox(4,4)[cc]{}}
\put(88,18){\framebox(4,4)[cc]{}}
\put(118,8){\framebox(4,4)[cc]{}}
\put(78,38){\framebox(4,4)[cc]{}}
\put(108,28){\framebox(4,4)[cc]{}}
\put(30,10){\circle{4}}
\put(20,30){\circle{4}}
\put(50,20){\circle{4}}
\put(40,40){\circle{4}}
\put(60,50){\circle{4}}
\put(80,60){\circle{4}}
\put(110,50){\circle{4}}
\put(30,60){\circle{4}}
\put(10,50){\circle{4}}
\put(20,80){\circle{4}}
\put(70,30){\circle{4}}
\put(80,10){\circle{4}}
\put(100,20){\circle{4}}
\put(50,70){\circle{4}}
\put(70,80){\circle{4}}
\put(100,70){\circle{4}}
\put(120,80){\circle{4}}
\put(70,30){\circle{4}}
\put(80,10){\circle{4}}
\put(100,20){\circle{4}}
\put(90,40){\circle{4}}
\put(120,30){\circle{4}}
\end{picture}
\end{center}
\caption{A section of the infinite grid graph $\mathbb{Z}^2$ 
and two dominating sets 
$D_\Box$ and $D_{\kr}$.}\label{fig1}
\end{figure}

The reason why we consider graphs embedded on the torus
rather than the more conventional grid graphs $P_k^2$
is to avoid boundary effects. 
It is easy to see that $\gamma(P_k^2)=k^2/5+\Omega(k)$ \cite{gopirath},
that is, the domination number $\gamma(P_k^2)$
of $P_k^2$ deviates by a term of the order $\Omega(k)=\Omega\left(\sqrt{n(P_k^2)}\right)$
from the lower bound $n(P_k^2)/(\Delta(P_k^2)+1)$,
and this disturbing deviation 
is of the same order of magnitude 
as our lower bound on 
$\partial\gamma_G(D,D').$

The elements of $D_\Box$ and $D_{\kr}$
come in adjacent pairs $p=\{ u_\Box,u_{\kr}\}$
with $u_\Box\in D_\Box$ and $u_{\kr}\in D_{\kr}$.
We consider an infinite auxiliary graph $H_\infty$
whose vertices are these pairs,
indicated by diamonds $\diamond$ in Figure \ref{fig2},
and in which two distinct pairs 
$p=\{ u_\Box,u_{\kr}\}$
and
$p'=\{ u'_\Box,u'_{\kr}\}$
are adjacent if
$$\Big(N_{\mathbb{Z}^2}\big[u_\Box\big]\cup N_{\mathbb{Z}^2}\big[u_{\kr}\big]\Big)
\cap 
\Big(N_{\mathbb{Z}^2}\big[u'_\Box\big]\cup N_{\mathbb{Z}^2}\big[u'_{\kr}\big
]\Big)
\not=\emptyset.$$
See Figure \ref{fig2} for an illustration of $H_\infty$.

\begin{figure}[h]
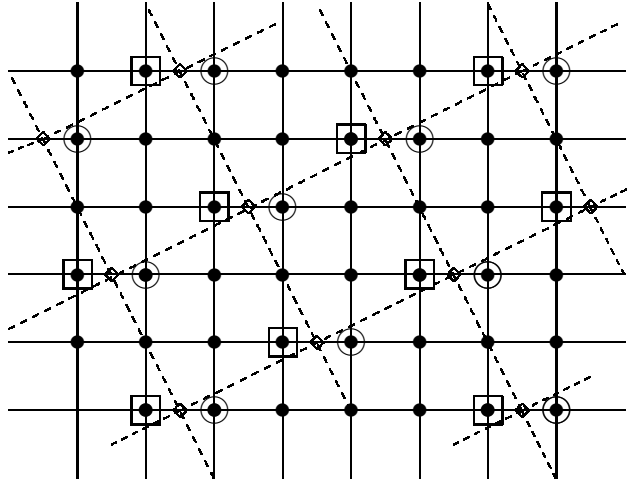

\begin{center}
\unitlength 0.9mm 
\linethickness{0.4pt}
\ifx\plotpoint\undefined\newsavebox{\plotpoint}\fi 

\end{center}
\caption{A section of the infinite auxiliary graph $H_\infty$ 
whose vertices 
are indicated by diamonds $\diamond$ and whose edges 
are indicated by dashed lines.}\label{fig2}
\end{figure}
$H_\infty$ is a tilted grid graph sitting within $\mathbb{Z}^2$
at an angle of $\arcsin \left(\frac{1}{\sqrt{5}}\right)\approx 26,56^\circ$.
The finite subgraph of $H_\infty$ corresponding to the pairs
$p=\{ u_\Box,u_{\kr}\}$
where $u_\Box$ and $u_{\kr}$ both belong to $G$
will be denoted by $H$.
At some point we want to apply to $H$
an isoperimetric inequality for the discrete torus $\mathbb{Z}^2_k$
with even $k$ 
that was shown by Bollob\'{a}s and Leader \cite{bole}.
Therefore, our construction of $G$, and hence of $H$,
ensures that $H$ is isomorphic to $\mathbb{Z}^2_k$
for some sufficiently large integer $k$ that is a multiple of $4$.
More precisely,
in order to construct $G$ and $H$,
we select, for some sufficiently large integer $k\equiv 0\mod 4$,
a $(k+1)\times (k+1)$ grid subgraph of $H_\infty$,
as illustrated in Figure \ref{fig3} for $k=8$,
and identify 
\begin{itemize}
\item the left border with the right border both from top to bottom, and
\item the top border with the bottom border both from left to right.
\end{itemize}
This yields a graph $H$ isomorphic to $\mathbb{Z}_k^2$
as well as a graph $G$ of order $5\times k^2$,
both embeddable on the torus,
for which 
$D=V(G)\cap D_\Box$
and 
$D'=V(G)\cap D_{\kr}$
are two minimum dominating sets of order $k^2$.

\begin{figure}[h]
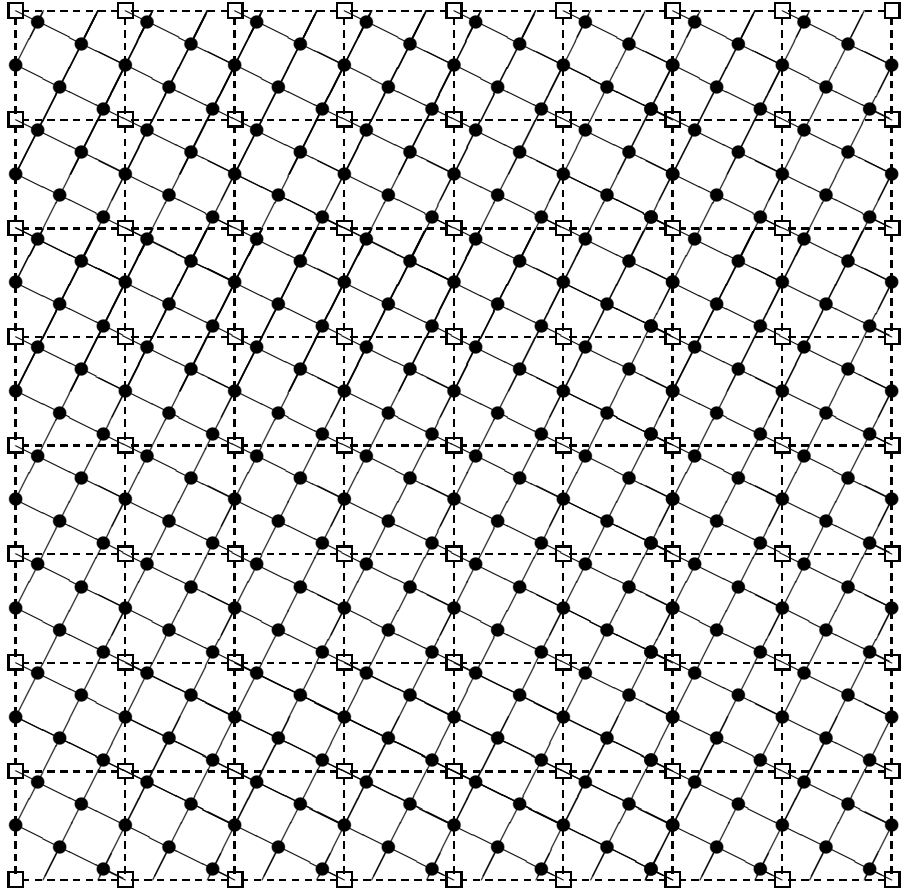

\begin{center}
\unitlength 0.9mm 
\linethickness{0.4pt}
\ifx\plotpoint\undefined\newsavebox{\plotpoint}\fi 

\end{center}
\caption{A $9\times 9$ grid subgraph of $H_\infty$.
Identifying 
the left border with the right border both from top to bottom, and
the top border with the bottom border both from left to right
yields $H$ isomorphic to $\mathbb{Z}_8^2$
and a graph $G$ of order $5\times 8^2$
embeddable on the torus
for which 
$D=V(G)\cap D_\Box$
and 
$D'=V(G)\cap D_{\kr}$
are two minimum dominating sets in $G$ both of order $8^2$.}\label{fig3}
\end{figure}
Now, let 
$$D_0,\ldots,D_\ell$$ 
be a sequence of dominating sets in $G$ such that 
$D=D_0$, 
$D'=D_\ell$, 
and $D_{i-1}$ is adjacent to $D_i$ for every $i\in [\ell]$, 
where $[\ell]=\{ 1,\ldots,\ell\}$.
For every pair $p=\{ u_\Box,u_{\kr}\}$ that is a vertex of $H$,
we say that $p$ is 
\begin{itemize}
\item of {\it type ${\rm left}$ in $D_i$} if $\{ u_\Box,u_{\kr}\}\cap D_i=\{ u_\Box\}$,
\item of {\it type ${\rm right}$ in $D_i$} if $\{ u_\Box,u_{\kr}\}\cap D_i=\{ u_{\kr}\}$,
\item of {\it type $0$ in $D_i$} if $\{ u_\Box,u_{\kr}\}\cap D_i=\emptyset$, and 
\item of {\it type $2$ in $D_i$} if $\{ u_\Box,u_{\kr}\}\cap D_i=\{ u_\Box,u_{\kr}\}$, respectively.
\end{itemize}
See Figure \ref{fig4} for an illustration of these four possibilities.

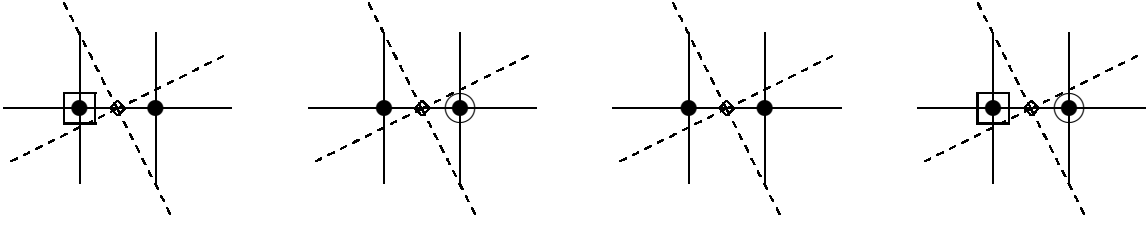
\begin{figure}[h]
\begin{center}
\unitlength 1mm 
\linethickness{0.4pt}
\ifx\plotpoint\undefined\newsavebox{\plotpoint}\fi 
\hfill$\mbox{}$
\unitlength 1mm 
\linethickness{0.4pt}
\ifx\plotpoint\undefined\newsavebox{\plotpoint}\fi 
\begin{picture}(29,30)(0,0)
\put(9,15){\circle*{2}}
\put(19,15){\circle*{2}}
\put(-1,15){\line(1,0){10}}
\put(9,15){\line(1,0){10}}
\put(19,15){\line(1,0){10}}
\put(9,5){\line(0,1){10}}
\put(9,15){\line(0,1){10}}
\put(19,5){\line(0,1){10}}
\put(19,15){\line(0,1){10}}
\put(7,13){\framebox(4,4)[cc]{}}
\multiput(13,15)(.0333333,.0333333){30}{\line(0,1){.0333333}}
\multiput(14,16)(.0333333,-.0333333){30}{\line(0,-1){.0333333}}
\multiput(15,15)(-.0333333,-.0333333){30}{\line(0,-1){.0333333}}
\multiput(14,14)(-.0333333,.0333333){30}{\line(0,1){.0333333}}
\multiput(13.93,14.93)(.0633484,.0316742){13}{\line(1,0){.0633484}}
\multiput(15.577,15.753)(.0633484,.0316742){13}{\line(1,0){.0633484}}
\multiput(17.224,16.577)(.0633484,.0316742){13}{\line(1,0){.0633484}}
\multiput(18.871,17.4)(.0633484,.0316742){13}{\line(1,0){.0633484}}
\multiput(20.518,18.224)(.0633484,.0316742){13}{\line(1,0){.0633484}}
\multiput(22.165,19.047)(.0633484,.0316742){13}{\line(1,0){.0633484}}
\multiput(23.812,19.871)(.0633484,.0316742){13}{\line(1,0){.0633484}}
\multiput(25.459,20.694)(.0633484,.0316742){13}{\line(1,0){.0633484}}
\multiput(27.106,21.518)(.0633484,.0316742){13}{\line(1,0){.0633484}}
\multiput(-.07,7.93)(.0633484,.0316742){13}{\line(1,0){.0633484}}
\multiput(1.577,8.753)(.0633484,.0316742){13}{\line(1,0){.0633484}}
\multiput(3.224,9.577)(.0633484,.0316742){13}{\line(1,0){.0633484}}
\multiput(4.871,10.4)(.0633484,.0316742){13}{\line(1,0){.0633484}}
\multiput(6.518,11.224)(.0633484,.0316742){13}{\line(1,0){.0633484}}
\multiput(8.165,12.047)(.0633484,.0316742){13}{\line(1,0){.0633484}}
\multiput(9.812,12.871)(.0633484,.0316742){13}{\line(1,0){.0633484}}
\multiput(11.459,13.694)(.0633484,.0316742){13}{\line(1,0){.0633484}}
\multiput(13.106,14.518)(.0633484,.0316742){13}{\line(1,0){.0633484}}
\multiput(13.93,14.93)(-.0316742,.0633484){13}{\line(0,1){.0633484}}
\multiput(13.106,16.577)(-.0316742,.0633484){13}{\line(0,1){.0633484}}
\multiput(12.283,18.224)(-.0316742,.0633484){13}{\line(0,1){.0633484}}
\multiput(11.459,19.871)(-.0316742,.0633484){13}{\line(0,1){.0633484}}
\multiput(10.636,21.518)(-.0316742,.0633484){13}{\line(0,1){.0633484}}
\multiput(9.812,23.165)(-.0316742,.0633484){13}{\line(0,1){.0633484}}
\multiput(8.989,24.812)(-.0316742,.0633484){13}{\line(0,1){.0633484}}
\multiput(8.165,26.459)(-.0316742,.0633484){13}{\line(0,1){.0633484}}
\multiput(7.341,28.106)(-.0316742,.0633484){13}{\line(0,1){.0633484}}
\multiput(20.93,.93)(-.0316742,.0633484){13}{\line(0,1){.0633484}}
\multiput(20.106,2.577)(-.0316742,.0633484){13}{\line(0,1){.0633484}}
\multiput(19.283,4.224)(-.0316742,.0633484){13}{\line(0,1){.0633484}}
\multiput(18.459,5.871)(-.0316742,.0633484){13}{\line(0,1){.0633484}}
\multiput(17.636,7.518)(-.0316742,.0633484){13}{\line(0,1){.0633484}}
\multiput(16.812,9.165)(-.0316742,.0633484){13}{\line(0,1){.0633484}}
\multiput(15.989,10.812)(-.0316742,.0633484){13}{\line(0,1){.0633484}}
\multiput(15.165,12.459)(-.0316742,.0633484){13}{\line(0,1){.0633484}}
\multiput(14.341,14.106)(-.0316742,.0633484){13}{\line(0,1){.0633484}}
\end{picture}
\hfill
\unitlength 1mm 
\linethickness{0.4pt}
\ifx\plotpoint\undefined\newsavebox{\plotpoint}\fi 
\begin{picture}(29,30)(0,0)
\put(9,15){\circle*{2}}
\put(19,15){\circle*{2}}
\put(-1,15){\line(1,0){10}}
\put(9,15){\line(1,0){10}}
\put(19,15){\line(1,0){10}}
\put(9,5){\line(0,1){10}}
\put(9,15){\line(0,1){10}}
\put(19,5){\line(0,1){10}}
\put(19,15){\line(0,1){10}}
\put(19,15){\circle{4}}
\multiput(13,15)(.0333333,.0333333){30}{\line(0,1){.0333333}}
\multiput(14,16)(.0333333,-.0333333){30}{\line(0,-1){.0333333}}
\multiput(15,15)(-.0333333,-.0333333){30}{\line(0,-1){.0333333}}
\multiput(14,14)(-.0333333,.0333333){30}{\line(0,1){.0333333}}
\multiput(13.93,14.93)(.0633484,.0316742){13}{\line(1,0){.0633484}}
\multiput(15.577,15.753)(.0633484,.0316742){13}{\line(1,0){.0633484}}
\multiput(17.224,16.577)(.0633484,.0316742){13}{\line(1,0){.0633484}}
\multiput(18.871,17.4)(.0633484,.0316742){13}{\line(1,0){.0633484}}
\multiput(20.518,18.224)(.0633484,.0316742){13}{\line(1,0){.0633484}}
\multiput(22.165,19.047)(.0633484,.0316742){13}{\line(1,0){.0633484}}
\multiput(23.812,19.871)(.0633484,.0316742){13}{\line(1,0){.0633484}}
\multiput(25.459,20.694)(.0633484,.0316742){13}{\line(1,0){.0633484}}
\multiput(27.106,21.518)(.0633484,.0316742){13}{\line(1,0){.0633484}}
\multiput(-.07,7.93)(.0633484,.0316742){13}{\line(1,0){.0633484}}
\multiput(1.577,8.753)(.0633484,.0316742){13}{\line(1,0){.0633484}}
\multiput(3.224,9.577)(.0633484,.0316742){13}{\line(1,0){.0633484}}
\multiput(4.871,10.4)(.0633484,.0316742){13}{\line(1,0){.0633484}}
\multiput(6.518,11.224)(.0633484,.0316742){13}{\line(1,0){.0633484}}
\multiput(8.165,12.047)(.0633484,.0316742){13}{\line(1,0){.0633484}}
\multiput(9.812,12.871)(.0633484,.0316742){13}{\line(1,0){.0633484}}
\multiput(11.459,13.694)(.0633484,.0316742){13}{\line(1,0){.0633484}}
\multiput(13.106,14.518)(.0633484,.0316742){13}{\line(1,0){.0633484}}
\multiput(13.93,14.93)(-.0316742,.0633484){13}{\line(0,1){.0633484}}
\multiput(13.106,16.577)(-.0316742,.0633484){13}{\line(0,1){.0633484}}
\multiput(12.283,18.224)(-.0316742,.0633484){13}{\line(0,1){.0633484}}
\multiput(11.459,19.871)(-.0316742,.0633484){13}{\line(0,1){.0633484}}
\multiput(10.636,21.518)(-.0316742,.0633484){13}{\line(0,1){.0633484}}
\multiput(9.812,23.165)(-.0316742,.0633484){13}{\line(0,1){.0633484}}
\multiput(8.989,24.812)(-.0316742,.0633484){13}{\line(0,1){.0633484}}
\multiput(8.165,26.459)(-.0316742,.0633484){13}{\line(0,1){.0633484}}
\multiput(7.341,28.106)(-.0316742,.0633484){13}{\line(0,1){.0633484}}
\multiput(20.93,.93)(-.0316742,.0633484){13}{\line(0,1){.0633484}}
\multiput(20.106,2.577)(-.0316742,.0633484){13}{\line(0,1){.0633484}}
\multiput(19.283,4.224)(-.0316742,.0633484){13}{\line(0,1){.0633484}}
\multiput(18.459,5.871)(-.0316742,.0633484){13}{\line(0,1){.0633484}}
\multiput(17.636,7.518)(-.0316742,.0633484){13}{\line(0,1){.0633484}}
\multiput(16.812,9.165)(-.0316742,.0633484){13}{\line(0,1){.0633484}}
\multiput(15.989,10.812)(-.0316742,.0633484){13}{\line(0,1){.0633484}}
\multiput(15.165,12.459)(-.0316742,.0633484){13}{\line(0,1){.0633484}}
\multiput(14.341,14.106)(-.0316742,.0633484){13}{\line(0,1){.0633484}}
\end{picture}
\hfill
\unitlength 1mm 
\linethickness{0.4pt}
\ifx\plotpoint\undefined\newsavebox{\plotpoint}\fi 
\begin{picture}(29,30)(0,0)
\put(9,15){\circle*{2}}
\put(19,15){\circle*{2}}
\put(-1,15){\line(1,0){10}}
\put(9,15){\line(1,0){10}}
\put(19,15){\line(1,0){10}}
\put(9,5){\line(0,1){10}}
\put(9,15){\line(0,1){10}}
\put(19,5){\line(0,1){10}}
\put(19,15){\line(0,1){10}}
\multiput(13,15)(.0333333,.0333333){30}{\line(0,1){.0333333}}
\multiput(14,16)(.0333333,-.0333333){30}{\line(0,-1){.0333333}}
\multiput(15,15)(-.0333333,-.0333333){30}{\line(0,-1){.0333333}}
\multiput(14,14)(-.0333333,.0333333){30}{\line(0,1){.0333333}}
\multiput(13.93,14.93)(.0633484,.0316742){13}{\line(1,0){.0633484}}
\multiput(15.577,15.753)(.0633484,.0316742){13}{\line(1,0){.0633484}}
\multiput(17.224,16.577)(.0633484,.0316742){13}{\line(1,0){.0633484}}
\multiput(18.871,17.4)(.0633484,.0316742){13}{\line(1,0){.0633484}}
\multiput(20.518,18.224)(.0633484,.0316742){13}{\line(1,0){.0633484}}
\multiput(22.165,19.047)(.0633484,.0316742){13}{\line(1,0){.0633484}}
\multiput(23.812,19.871)(.0633484,.0316742){13}{\line(1,0){.0633484}}
\multiput(25.459,20.694)(.0633484,.0316742){13}{\line(1,0){.0633484}}
\multiput(27.106,21.518)(.0633484,.0316742){13}{\line(1,0){.0633484}}
\multiput(-.07,7.93)(.0633484,.0316742){13}{\line(1,0){.0633484}}
\multiput(1.577,8.753)(.0633484,.0316742){13}{\line(1,0){.0633484}}
\multiput(3.224,9.577)(.0633484,.0316742){13}{\line(1,0){.0633484}}
\multiput(4.871,10.4)(.0633484,.0316742){13}{\line(1,0){.0633484}}
\multiput(6.518,11.224)(.0633484,.0316742){13}{\line(1,0){.0633484}}
\multiput(8.165,12.047)(.0633484,.0316742){13}{\line(1,0){.0633484}}
\multiput(9.812,12.871)(.0633484,.0316742){13}{\line(1,0){.0633484}}
\multiput(11.459,13.694)(.0633484,.0316742){13}{\line(1,0){.0633484}}
\multiput(13.106,14.518)(.0633484,.0316742){13}{\line(1,0){.0633484}}
\multiput(13.93,14.93)(-.0316742,.0633484){13}{\line(0,1){.0633484}}
\multiput(13.106,16.577)(-.0316742,.0633484){13}{\line(0,1){.0633484}}
\multiput(12.283,18.224)(-.0316742,.0633484){13}{\line(0,1){.0633484}}
\multiput(11.459,19.871)(-.0316742,.0633484){13}{\line(0,1){.0633484}}
\multiput(10.636,21.518)(-.0316742,.0633484){13}{\line(0,1){.0633484}}
\multiput(9.812,23.165)(-.0316742,.0633484){13}{\line(0,1){.0633484}}
\multiput(8.989,24.812)(-.0316742,.0633484){13}{\line(0,1){.0633484}}
\multiput(8.165,26.459)(-.0316742,.0633484){13}{\line(0,1){.0633484}}
\multiput(7.341,28.106)(-.0316742,.0633484){13}{\line(0,1){.0633484}}
\multiput(20.93,.93)(-.0316742,.0633484){13}{\line(0,1){.0633484}}
\multiput(20.106,2.577)(-.0316742,.0633484){13}{\line(0,1){.0633484}}
\multiput(19.283,4.224)(-.0316742,.0633484){13}{\line(0,1){.0633484}}
\multiput(18.459,5.871)(-.0316742,.0633484){13}{\line(0,1){.0633484}}
\multiput(17.636,7.518)(-.0316742,.0633484){13}{\line(0,1){.0633484}}
\multiput(16.812,9.165)(-.0316742,.0633484){13}{\line(0,1){.0633484}}
\multiput(15.989,10.812)(-.0316742,.0633484){13}{\line(0,1){.0633484}}
\multiput(15.165,12.459)(-.0316742,.0633484){13}{\line(0,1){.0633484}}
\multiput(14.341,14.106)(-.0316742,.0633484){13}{\line(0,1){.0633484}}
\end{picture}
\hfill
\unitlength 1mm 
\linethickness{0.4pt}
\ifx\plotpoint\undefined\newsavebox{\plotpoint}\fi 
\begin{picture}(29,30)(0,0)
\put(9,15){\circle*{2}}
\put(19,15){\circle*{2}}
\put(-1,15){\line(1,0){10}}
\put(9,15){\line(1,0){10}}
\put(19,15){\line(1,0){10}}
\put(9,5){\line(0,1){10}}
\put(9,15){\line(0,1){10}}
\put(19,5){\line(0,1){10}}
\put(19,15){\line(0,1){10}}
\put(7,13){\framebox(4,4)[cc]{}}
\put(19,15){\circle{4}}
\multiput(13,15)(.0333333,.0333333){30}{\line(0,1){.0333333}}
\multiput(14,16)(.0333333,-.0333333){30}{\line(0,-1){.0333333}}
\multiput(15,15)(-.0333333,-.0333333){30}{\line(0,-1){.0333333}}
\multiput(14,14)(-.0333333,.0333333){30}{\line(0,1){.0333333}}
\multiput(13.93,14.93)(.0633484,.0316742){13}{\line(1,0){.0633484}}
\multiput(15.577,15.753)(.0633484,.0316742){13}{\line(1,0){.0633484}}
\multiput(17.224,16.577)(.0633484,.0316742){13}{\line(1,0){.0633484}}
\multiput(18.871,17.4)(.0633484,.0316742){13}{\line(1,0){.0633484}}
\multiput(20.518,18.224)(.0633484,.0316742){13}{\line(1,0){.0633484}}
\multiput(22.165,19.047)(.0633484,.0316742){13}{\line(1,0){.0633484}}
\multiput(23.812,19.871)(.0633484,.0316742){13}{\line(1,0){.0633484}}
\multiput(25.459,20.694)(.0633484,.0316742){13}{\line(1,0){.0633484}}
\multiput(27.106,21.518)(.0633484,.0316742){13}{\line(1,0){.0633484}}
\multiput(-.07,7.93)(.0633484,.0316742){13}{\line(1,0){.0633484}}
\multiput(1.577,8.753)(.0633484,.0316742){13}{\line(1,0){.0633484}}
\multiput(3.224,9.577)(.0633484,.0316742){13}{\line(1,0){.0633484}}
\multiput(4.871,10.4)(.0633484,.0316742){13}{\line(1,0){.0633484}}
\multiput(6.518,11.224)(.0633484,.0316742){13}{\line(1,0){.0633484}}
\multiput(8.165,12.047)(.0633484,.0316742){13}{\line(1,0){.0633484}}
\multiput(9.812,12.871)(.0633484,.0316742){13}{\line(1,0){.0633484}}
\multiput(11.459,13.694)(.0633484,.0316742){13}{\line(1,0){.0633484}}
\multiput(13.106,14.518)(.0633484,.0316742){13}{\line(1,0){.0633484}}
\multiput(13.93,14.93)(-.0316742,.0633484){13}{\line(0,1){.0633484}}
\multiput(13.106,16.577)(-.0316742,.0633484){13}{\line(0,1){.0633484}}
\multiput(12.283,18.224)(-.0316742,.0633484){13}{\line(0,1){.0633484}}
\multiput(11.459,19.871)(-.0316742,.0633484){13}{\line(0,1){.0633484}}
\multiput(10.636,21.518)(-.0316742,.0633484){13}{\line(0,1){.0633484}}
\multiput(9.812,23.165)(-.0316742,.0633484){13}{\line(0,1){.0633484}}
\multiput(8.989,24.812)(-.0316742,.0633484){13}{\line(0,1){.0633484}}
\multiput(8.165,26.459)(-.0316742,.0633484){13}{\line(0,1){.0633484}}
\multiput(7.341,28.106)(-.0316742,.0633484){13}{\line(0,1){.0633484}}
\multiput(20.93,.93)(-.0316742,.0633484){13}{\line(0,1){.0633484}}
\multiput(20.106,2.577)(-.0316742,.0633484){13}{\line(0,1){.0633484}}
\multiput(19.283,4.224)(-.0316742,.0633484){13}{\line(0,1){.0633484}}
\multiput(18.459,5.871)(-.0316742,.0633484){13}{\line(0,1){.0633484}}
\multiput(17.636,7.518)(-.0316742,.0633484){13}{\line(0,1){.0633484}}
\multiput(16.812,9.165)(-.0316742,.0633484){13}{\line(0,1){.0633484}}
\multiput(15.989,10.812)(-.0316742,.0633484){13}{\line(0,1){.0633484}}
\multiput(15.165,12.459)(-.0316742,.0633484){13}{\line(0,1){.0633484}}
\multiput(14.341,14.106)(-.0316742,.0633484){13}{\line(0,1){.0633484}}
\end{picture}
\hfill$\mbox{}$
\end{center}
\caption{
From left to right,
a pair $p=\{ u_\Box,u_{\kr}\}$ of 
type ${\rm left}$,
${\rm right}$,
$0$, 
and $2$.}\label{fig4}
\end{figure}
Let $[\ell]_0=\{ 0,1,\ldots,\ell\}$.
For an integer $i\in [\ell]_0$ and a type $t\in \{ {\rm left},{\rm right},0,2\}$, 
let $P(i,t)$ be the set of all vertices 
$p=\{ u_\Box,u_{\kr}\}$ of $H$
that are of type $t$ in $D_i$,
and let $n(i,t)=|P(i,t)|$.
Trivially,
$$
n(0,t) =
\begin{cases}
k^2, & t={\rm left},\\
0, & \mbox{otherwise}.
\end{cases}
\,\,\,\,\,\,\,\,\,\,\,\,\,\,\mbox{ and }\,\,\,\,\,\,\,\,\,\,\,\,\,\,
n(\ell,t) =
\begin{cases}
k^2, & t={\rm right},\\
0, & \mbox{otherwise}.
\end{cases}
$$
Furthermore, for every $i\in [\ell]$ and type $t$, 
since $D_i$ arises from $D_{i-1}$ by removing or adding a single vertex,
we have
$$
\Big|n(i,t)-n(i-1,t)\Big|\leq 1.
$$
Since $k$ is a multiple of $4$,
$k^2/8$ is an integer.
Hence, if $j\in [\ell]_0$ is the smallest index such that 
$$n(j,{\rm left})\leq \frac{7k^2}{8},$$
then 
$$n(j,{\rm left})=\frac{7k^2}{8}
\,\,\,\,\,\,\,\,\,\,\,\,\,\,\mbox{ and }\,\,\,\,\,\,\,\,\,\,\,\,\,\,
n(j,{\rm right})+n(j,0)+n(j,2)=k^2-\frac{7k^2}{8}=\frac{k^2}{8}.$$
In order to complete the proof, 
we will show that the dominating set
$$D^*=D_j$$
has cardinality at least $k^2+\Omega(k)$.
This is done by showing that $D^*$ contains $\Omega(k)$
so-called inefficient vertices,
where a vertex $u$ in $D^*$ is {\it inefficient} 
if there is some vertex $v$ in $D^*\setminus \{ u\}$ with 
$N_G[u]\cap N_G[v]\not=\emptyset$.

Let $P(t)=P(j,t)$ 
and $n(t)=n(j,t)$ for every type $t$.
Let $P'({\rm left})$ be the subset of $P({\rm left})$
containing all vertices $p=\{ u_\Box,u_{\kr}\}$ of $H$
whose four neighbors in $H$ are all not of type ${\rm left}$.
Double counting the edges of $H$ between 
$P'({\rm left})$
and 
$P({\rm right})\cup P(0)\cup P(2)$
implies
$$\big|P'({\rm left})\big|\leq \big|P({\rm right})\cup P(0)\cup P(2)\big|=
\frac{k^2}{8},$$
and, hence, 
the set $P^*=P({\rm left})\setminus P'({\rm left})$ satisfies 
\begin{eqnarray}\label{e1}
\frac{6k^2}{8}\leq |P^*|\leq \frac{7k^2}{8}.
\end{eqnarray}
Let $P^{**}$ be the set of vertices in $P^*$
that have a neighbor outside of $P^*$.
Note that, by construction, 
every vertex in $P^{**}$ 
belongs to $P({\rm left})$, 
has a neighbor in $P({\rm left})$
as well as in $P({\rm right})\cup P(0)\cup P(2)$, 
but has no neighbor in $P'({\rm left})$.

Our next goal is the following.

\begin{lemma}\label{lemma1}
$|P^{**}|\geq \frac{k}{20}+O(1)$.
\end{lemma}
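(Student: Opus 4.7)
The plan is a direct isoperimetric estimate on the torus $H\cong\mathbb{Z}_k^2$, exploiting that the rows and columns of $H$ are $k$-cycles. Call a row of $H$ \emph{mixed} if it meets both $P^*$ and $V(H)\setminus P^*$, and analogously for columns; write $|M_R|$ and $|M_C|$ for the numbers of mixed rows and mixed columns. On any $k$-cycle, a proper nonempty subset has at least two vertices with a neighbor outside the subset, so every mixed row contributes at least two vertices of $P^*$ whose row-neighbor lies outside $P^*$; by definition these vertices lie in $P^{**}$. The same holds for columns, and taking whichever contribution is larger gives
$$|P^{**}|\;\geq\;2\max\bigl(|M_R|,\,|M_C|\bigr),$$
so it suffices to prove $\max(|M_R|,|M_C|)\geq k/40+O(1)$.

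I would establish this by contradiction, assuming $|M_R|<k/40$ and $|M_C|<k/40$. Partition the rows of $H$ into $m_1$ rows fully contained in $P^*$, $m_0$ rows disjoint from $P^*$, and the $|M_R|$ mixed rows; the mixed rows contribute at most $|M_R|(k-1)<k^2/40$ vertices to $P^*$. Combining with $|P^*|\in[6k^2/8,\,7k^2/8]$ from (\ref{e1}) gives
$$\frac{29k^2}{40}\;\leq\;m_1 k\;\leq\;\frac{7k^2}{8},$$
hence $m_1\in[29k/40,\,35k/40]$ and $m_0=k-m_1-|M_R|\geq 4k/40=k/10>0$. Thus there exist both a row entirely in $P^*$ and a row entirely in $V(H)\setminus P^*$, and applying the identical count to columns under $|M_C|<k/40$ produces both a column entirely in $P^*$ and a column entirely in $V(H)\setminus P^*$.

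A column entirely in $P^*$ meets a row entirely in $V(H)\setminus P^*$ in a unique vertex, which would be simultaneously in and not in $P^*$; this contradiction completes the proof and yields $|P^{**}|\geq k/20+O(1)$.

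The only point requiring care is the calibration of the threshold $k/40$, which must be small enough that both $m_1>0$ (using $|P^*|\geq 6k^2/8$) and $m_0>0$ (using $|P^*|\leq 7k^2/8$) are simultaneously forced on rows, and likewise on columns; it is precisely this double use of (\ref{e1}) that pins the constant at $1/20$ after the factor of $2$ from the cycle-boundary observation. Notably, this lemma itself does not require the Bollob\'as-Leader isoperimetric inequality flagged in the setup; that stronger tool is reserved for a later stage of the argument.
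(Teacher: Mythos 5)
Your argument is correct in substance but takes a genuinely different, and considerably more elementary, route than the paper. The paper applies the Bollob\'as--Leader isoperimetric inequality on $\mathbb{Z}_k^2$: it rounds $|P^*|$ to the cardinality of a ball $B(r^*)$ or $B(r^*-1)$, controls the resulting error with the perturbation bound (\ref{eh}), deduces $|N_H(P^*)|\geq \frac{4c_1k}{5}+O(1)\geq \frac{k}{5}+O(1)$, and finishes with $|P^{**}|\geq |N_H(P^*)|/4$. You instead slice the torus into its $k$ row-cycles and $k$ column-cycles and show that a set occupying between $6/8$ and $7/8$ of $V(H)$ with too few mixed rows and too few mixed columns would force a column contained in $P^*$ to meet a row disjoint from $P^*$. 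This dispenses with the isoperimetric machinery altogether; if adopted, it would make the Bollob\'as--Leader citation (and the care taken to keep $k$ even) unnecessary, since Lemma \ref{lemma1} is the \emph{only} place that inequality is used --- your closing remark that it is ``reserved for a later stage'' is not accurate.

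One step is overstated: a proper nonempty subset of a $k$-cycle consisting of a single vertex has only \emph{one} vertex with a neighbour outside it, so a mixed row meeting $P^*$ in exactly one vertex contributes one vertex to $P^{**}$, not two, and the inequality $|P^{**}|\geq 2\max\bigl(|M_R|,|M_C|\bigr)$ can fail. The slip is harmless. Using the safe bound $|P^{**}|\geq \max\bigl(|M_R|,|M_C|\bigr)$ and moving the threshold to $k/20$, the mixed rows contribute fewer than $\frac{2k^2}{40}$ vertices to $P^*$, whence $m_1k>\frac{30k^2}{40}-\frac{2k^2}{40}>0$ and $m_0>k-\frac{35k}{40}-\frac{2k}{40}=\frac{3k}{40}>0$; the same holds for columns, the same contradiction applies, and one obtains $\max\bigl(|M_R|,|M_C|\bigr)\geq k/20$, which is exactly the bound of Lemma \ref{lemma1}.
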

\begin{proof}
For a set $S$ of vertices of $H$, 
let $N_H(S)=\{ u\in V(H)\setminus S:N_H(u)\cap S\not=\emptyset\}$.
Let $u$ be any vertex of $H$,
and, for a non-negative integer $r$,
let $B(r)=\{ v\in V(H):{\rm dist}_H(u,v)\leq r\}$.
Since $H$ is vertex-transitive,
it follows that $|B(r)|$ is independent of the choice of $u$.
Bollob\'{a}s and Leader \cite{bole} showed that, 
if $|S|=|B(r)|$ for some non-negative integer $r$, then 
$$|N_H(S)|\geq |N_H(B(r))|=|B(r+1)|-|B(r)|.$$
Note that their result only applies to sets whose cardinality
is in $\{ |B(0)|,|B(1)|,|B(2)|,\ldots\}$,
which causes some technicalities in our proof.
Let the non-negative integer $r^*$ be such that 
$$|B(r^*-1)|\leq |P^*|\leq |B(r^*)|.$$
Combining some simple geometric considerations
illustrated in Figure \ref{fig5}
with (\ref{e1}) implies the existence of some 
$c_1\in \left[1/4,\sqrt{2}/4\right]$ such that
\begin{eqnarray} 
r^* & = & (1-c_1)k+O(1),\nonumber\\
|N_H(B(r^*))| &=& |B(r^*+1)|-|B(r^*)| = 4c_1k+O(1),\mbox{ and}\label{e2}\\
|N_H(B(r^*-1))| &=& |B(r^*)|-|B(r^*-1)| = 4c_1k+O(1).\nonumber
\end{eqnarray} 

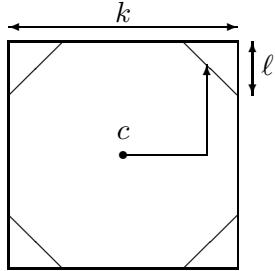
\begin{figure}[h]
\begin{center}
\unitlength 1mm 
\linethickness{0.4pt}
\ifx\plotpoint\undefined\newsavebox{\plotpoint}\fi 
\begin{picture}(34,33)(0,0)
\put(0,0){\framebox(30,30)[cc]{}}
\put(0,32){\vector(-1,0){.07}}\put(30,32){\vector(1,0){.07}}\put(30,32){\line(-1,0){30}}
\put(32,23){\vector(0,-1){.07}}\put(32,30){\vector(0,1){.07}}\put(32,30){\line(0,-1){7}}
\put(30,23){\line(-1,1){7}}
\put(7,30){\line(-1,-1){7}}
\put(30,7){\line(-1,-1){7}}
\put(7,0){\line(-1,1){7}}
\put(15,15){\circle*{1}}
\put(15,15){\line(1,0){11}}
\put(26,15){\vector(0,1){12}}
\put(15,34){\makebox(0,0)[cc]{$k$}}
\put(34,27){\makebox(0,0)[cc]{$\ell$}}
\put(15,18){\makebox(0,0)[cc]{$c$}}
\end{picture}
\end{center}
\caption{
Within a $k\times k$ square $Q$ in $\mathbb{R}^2$,
the set of points in $Q$ at Manhattan distance at most $k-\ell$
from the center point $c$ of $Q$ 
for some $\ell<k/2$ has area 
$k^2-2\ell^2$.
If $k^2-2\ell^2\in \left[6k^2/8,7k^2/8\right]$,
then $\ell/k\in \left[1/4,\sqrt{2}/4\right]$.}\label{fig5}
\end{figure}
If $S$ is a set of vertices of $H$, and $u$ is a vertex of $H$ outside of $S$, then 
\begin{eqnarray}\label{eh}
|N_H(S)|+4\geq |N_H(S\cup \{ u\})|\geq |N_H(S)|-1.
\end{eqnarray}
If $|B(r^*)|-|P^*|\leq \frac{4c_1k}{5}$,
then adding $|B(r^*)|-|P^*|$ vertices to $P^*$ 
yields a set $P'$ of order $|B(r^*)|$ with 
\begin{eqnarray*}
|N_H(P^*)|
&\stackrel{(\ref{eh})}{\geq} & |N_H(P')|-4\big(|B(r^*)|-|P^*|\big)\\
&\geq &|N_H(P')|-\frac{16c_1k}{5}\\
&\geq &|N_H(B(r^*))|-\frac{16c_1k}{5}\\
&\stackrel{(\ref{e2})}{\geq} & 4c_1k\left(1-\frac{4}{5}\right)+O(1)\\
&\geq &\frac{4c_1k}{5}+O(1).
\end{eqnarray*}
Conversely, if 
$|B(r^*)|-|P^*|>\frac{4c_1k}{5}$,
then (\ref{e2}) implies 
$|P^*|-|B(r^*-1)|\leq \frac{16c_1k}{5}+O(1)$,
and removing $|P^*|-|B(r^*-1)|$ vertices from $P^*$
yields a set $P''$ of order $|B(r^*-1)|$ with 
\begin{eqnarray*}
|N_H(P^*)|
&\stackrel{(\ref{eh})}{\geq} &|N_H(P'')|-\left(\frac{16c_1k}{5}+O(1)\right)\\
&\geq & |N_H(P'')|-\frac{16c_1k}{5}+O(1)\\
&\geq & |N_H(B(r^*-1))|-\frac{16c_1k}{5}+O(1)\\
&\stackrel{(\ref{e2})}{\geq} & 4c_1k\left(1-\frac{4}{5}\right)+O(1)\\
&\geq &\frac{4c_1k}{5}+O(1).
\end{eqnarray*}
Altogether, it follows that 
$$|N_H(P^*)|\geq \frac{4c_1k}{5}+O(1)\geq \frac{k}{5}+O(1).$$
Since
$|P^{**}|\geq |N_H(P^*)|/4$,
the proof of Lemma \ref{lemma1} is complete. 
\end{proof}

Now, let $p=\{ u_\Box,u_{\kr}\}$ be a vertex in $P^{**}$,
illustrated as the pair $\{ (2,2),(3,2)\}$ in Figure \ref{fig6}.

\begin{figure}[h]
\begin{center}
\unitlength 1mm 
\linethickness{0.4pt}
\ifx\plotpoint\undefined\newsavebox{\plotpoint}\fi 
\begin{picture}(71,60)(0,0)
\put(10,10){\circle*{2}}
\put(20,20){\circle*{2}}
\put(10,30){\circle*{2}}
\put(20,10){\circle*{2}}
\put(20,20){\circle*{2}}
\put(20,30){\circle*{2}}
\put(20,40){\circle*{2}}
\put(20,50){\circle*{2}}
\put(30,10){\circle*{2}}
\put(30,20){\circle*{2}}
\put(30,30){\circle*{2}}
\put(10,20){\circle*{2}}
\put(30,40){\circle*{2}}
\put(30,50){\circle*{2}}
\put(40,10){\circle*{2}}
\put(40,20){\circle*{2}}
\put(40,30){\circle*{2}}
\put(40,40){\circle*{2}}
\put(40,50){\circle*{2}}
\put(50,10){\circle*{2}}
\put(50,20){\circle*{2}}
\put(50,30){\circle*{2}}
\put(50,40){\circle*{2}}
\put(50,50){\circle*{2}}
\put(60,30){\circle*{2}}
\put(50,40){\circle*{2}}
\put(60,50){\circle*{2}}
\put(60,40){\circle*{2}}
\put(0,10){\line(1,0){10}}
\put(0,20){\line(1,0){10}}
\put(0,30){\line(1,0){10}}
\put(10,10){\line(1,0){10}}
\put(10,20){\line(1,0){10}}
\put(10,30){\line(1,0){10}}
\put(10,40){\line(1,0){10}}
\put(10,50){\line(1,0){10}}
\put(20,10){\line(1,0){10}}
\put(20,20){\line(1,0){10}}
\put(20,30){\line(1,0){10}}
\put(20,40){\line(1,0){10}}
\put(20,50){\line(1,0){10}}
\put(30,10){\line(1,0){10}}
\put(30,20){\line(1,0){10}}
\put(30,30){\line(1,0){10}}
\put(30,40){\line(1,0){10}}
\put(30,50){\line(1,0){10}}
\put(40,10){\line(1,0){10}}
\put(40,20){\line(1,0){10}}
\put(40,30){\line(1,0){10}}
\put(40,40){\line(1,0){10}}
\put(40,50){\line(1,0){10}}
\put(50,10){\line(1,0){10}}
\put(50,20){\line(1,0){10}}
\put(50,30){\line(1,0){10}}
\put(50,40){\line(1,0){10}}
\put(50,50){\line(1,0){10}}
\put(60,30){\line(1,0){10}}
\put(50,40){\line(1,0){10}}
\put(60,50){\line(1,0){10}}
\put(60,40){\line(1,0){10}}
\put(10,0){\line(0,1){10}}
\put(10,10){\line(0,1){10}}
\put(10,20){\line(0,1){10}}
\put(20,0){\line(0,1){10}}
\put(20,10){\line(0,1){10}}
\put(20,20){\line(0,1){10}}
\put(20,30){\line(0,1){10}}
\put(20,40){\line(0,1){10}}
\put(20,50){\line(0,1){10}}
\put(30,0){\line(0,1){10}}
\put(30,10){\line(0,1){10}}
\put(30,20){\line(0,1){10}}
\put(30,30){\line(0,1){10}}
\put(30,40){\line(0,1){10}}
\put(30,50){\line(0,1){10}}
\put(40,0){\line(0,1){10}}
\put(40,10){\line(0,1){10}}
\put(40,20){\line(0,1){10}}
\put(40,30){\line(0,1){10}}
\put(40,40){\line(0,1){10}}
\put(40,50){\line(0,1){10}}
\put(50,0){\line(0,1){10}}
\put(50,10){\line(0,1){10}}
\put(50,20){\line(0,1){10}}
\put(50,30){\line(0,1){10}}
\put(50,40){\line(0,1){10}}
\put(50,50){\line(0,1){10}}
\put(60,30){\line(0,1){10}}
\put(60,40){\line(0,1){10}}
\put(60,50){\line(0,1){10}}
\put(38,8){\framebox(4,4)[cc]{}}
\put(28,28){\framebox(4,4)[cc]{}}
\put(8,18){\framebox(4,4)[cc]{}}
\put(48,38){\framebox(4,4)[cc]{}}
\put(18,48){\framebox(4,4)[cc]{}}
\put(20,20){\circle{4}}
\put(50,10){\circle{4}}
\put(40,30){\circle{4}}
\put(60,40){\circle{4}}
\put(30,50){\circle{4}}
\multiput(14,20)(.0333333,.0333333){30}{\line(0,1){.0333333}}
\multiput(44,10)(.0333333,.0333333){30}{\line(0,1){.0333333}}
\multiput(34,30)(.0333333,.0333333){30}{\line(0,1){.0333333}}
\multiput(24,50)(.0333333,.0333333){30}{\line(0,1){.0333333}}
\multiput(54,40)(.0333333,.0333333){30}{\line(0,1){.0333333}}
\multiput(15,21)(.0333333,-.0333333){30}{\line(0,-1){.0333333}}
\multiput(45,11)(.0333333,-.0333333){30}{\line(0,-1){.0333333}}
\multiput(35,31)(.0333333,-.0333333){30}{\line(0,-1){.0333333}}
\multiput(25,51)(.0333333,-.0333333){30}{\line(0,-1){.0333333}}
\multiput(55,41)(.0333333,-.0333333){30}{\line(0,-1){.0333333}}
\multiput(16,20)(-.0333333,-.0333333){30}{\line(0,-1){.0333333}}
\multiput(46,10)(-.0333333,-.0333333){30}{\line(0,-1){.0333333}}
\multiput(36,30)(-.0333333,-.0333333){30}{\line(0,-1){.0333333}}
\multiput(26,50)(-.0333333,-.0333333){30}{\line(0,-1){.0333333}}
\multiput(56,40)(-.0333333,-.0333333){30}{\line(0,-1){.0333333}}
\multiput(15,19)(-.0333333,.0333333){30}{\line(0,1){.0333333}}
\multiput(45,9)(-.0333333,.0333333){30}{\line(0,1){.0333333}}
\multiput(35,29)(-.0333333,.0333333){30}{\line(0,1){.0333333}}
\multiput(25,49)(-.0333333,.0333333){30}{\line(0,1){.0333333}}
\multiput(55,39)(-.0333333,.0333333){30}{\line(0,1){.0333333}}
\multiput(49.93,-.07)(-.0315126,.0630252){14}{\line(0,1){.0630252}}
\multiput(49.047,1.694)(-.0315126,.0630252){14}{\line(0,1){.0630252}}
\multiput(48.165,3.459)(-.0315126,.0630252){14}{\line(0,1){.0630252}}
\multiput(47.283,5.224)(-.0315126,.0630252){14}{\line(0,1){.0630252}}
\multiput(46.4,6.989)(-.0315126,.0630252){14}{\line(0,1){.0630252}}
\multiput(45.518,8.753)(-.0315126,.0630252){14}{\line(0,1){.0630252}}
\multiput(44.636,10.518)(-.0315126,.0630252){14}{\line(0,1){.0630252}}
\multiput(43.753,12.283)(-.0315126,.0630252){14}{\line(0,1){.0630252}}
\multiput(42.871,14.047)(-.0315126,.0630252){14}{\line(0,1){.0630252}}
\multiput(41.989,15.812)(-.0315126,.0630252){14}{\line(0,1){.0630252}}
\multiput(41.106,17.577)(-.0315126,.0630252){14}{\line(0,1){.0630252}}
\multiput(40.224,19.341)(-.0315126,.0630252){14}{\line(0,1){.0630252}}
\multiput(39.341,21.106)(-.0315126,.0630252){14}{\line(0,1){.0630252}}
\multiput(38.459,22.871)(-.0315126,.0630252){14}{\line(0,1){.0630252}}
\multiput(37.577,24.636)(-.0315126,.0630252){14}{\line(0,1){.0630252}}
\multiput(36.694,26.4)(-.0315126,.0630252){14}{\line(0,1){.0630252}}
\multiput(35.812,28.165)(-.0315126,.0630252){14}{\line(0,1){.0630252}}
\multiput(34.93,29.93)(-.0315126,.0630252){14}{\line(0,1){.0630252}}
\multiput(34.047,31.694)(-.0315126,.0630252){14}{\line(0,1){.0630252}}
\multiput(33.165,33.459)(-.0315126,.0630252){14}{\line(0,1){.0630252}}
\multiput(32.283,35.224)(-.0315126,.0630252){14}{\line(0,1){.0630252}}
\multiput(31.4,36.989)(-.0315126,.0630252){14}{\line(0,1){.0630252}}
\multiput(30.518,38.753)(-.0315126,.0630252){14}{\line(0,1){.0630252}}
\multiput(29.636,40.518)(-.0315126,.0630252){14}{\line(0,1){.0630252}}
\multiput(28.753,42.283)(-.0315126,.0630252){14}{\line(0,1){.0630252}}
\multiput(27.871,44.047)(-.0315126,.0630252){14}{\line(0,1){.0630252}}
\multiput(26.989,45.812)(-.0315126,.0630252){14}{\line(0,1){.0630252}}
\multiput(26.106,47.577)(-.0315126,.0630252){14}{\line(0,1){.0630252}}
\multiput(25.224,49.341)(-.0315126,.0630252){14}{\line(0,1){.0630252}}
\multiput(24.341,51.106)(-.0315126,.0630252){14}{\line(0,1){.0630252}}
\multiput(23.459,52.871)(-.0315126,.0630252){14}{\line(0,1){.0630252}}
\multiput(22.577,54.636)(-.0315126,.0630252){14}{\line(0,1){.0630252}}
\multiput(21.694,56.4)(-.0315126,.0630252){14}{\line(0,1){.0630252}}
\multiput(20.812,58.165)(-.0315126,.0630252){14}{\line(0,1){.0630252}}
\multiput(34.93,29.93)(.0659341,.032967){13}{\line(1,0){.0659341}}
\multiput(36.644,30.787)(.0659341,.032967){13}{\line(1,0){.0659341}}
\multiput(38.358,31.644)(.0659341,.032967){13}{\line(1,0){.0659341}}
\multiput(40.073,32.501)(.0659341,.032967){13}{\line(1,0){.0659341}}
\multiput(41.787,33.358)(.0659341,.032967){13}{\line(1,0){.0659341}}
\multiput(43.501,34.215)(.0659341,.032967){13}{\line(1,0){.0659341}}
\multiput(45.215,35.073)(.0659341,.032967){13}{\line(1,0){.0659341}}
\multiput(46.93,35.93)(.0659341,.032967){13}{\line(1,0){.0659341}}
\multiput(48.644,36.787)(.0659341,.032967){13}{\line(1,0){.0659341}}
\multiput(50.358,37.644)(.0659341,.032967){13}{\line(1,0){.0659341}}
\multiput(52.073,38.501)(.0659341,.032967){13}{\line(1,0){.0659341}}
\multiput(53.787,39.358)(.0659341,.032967){13}{\line(1,0){.0659341}}
\multiput(55.501,40.215)(.0659341,.032967){13}{\line(1,0){.0659341}}
\multiput(57.215,41.073)(.0659341,.032967){13}{\line(1,0){.0659341}}
\multiput(58.93,41.93)(.0659341,.032967){13}{\line(1,0){.0659341}}
\multiput(60.644,42.787)(.0659341,.032967){13}{\line(1,0){.0659341}}
\multiput(62.358,43.644)(.0659341,.032967){13}{\line(1,0){.0659341}}
\multiput(64.073,44.501)(.0659341,.032967){13}{\line(1,0){.0659341}}
\multiput(65.787,45.358)(.0659341,.032967){13}{\line(1,0){.0659341}}
\multiput(67.501,46.215)(.0659341,.032967){13}{\line(1,0){.0659341}}
\multiput(69.215,47.073)(.0659341,.032967){13}{\line(1,0){.0659341}}
\multiput(-1.07,11.93)(.0659341,.032967){13}{\line(1,0){.0659341}}
\multiput(.644,12.787)(.0659341,.032967){13}{\line(1,0){.0659341}}
\multiput(2.358,13.644)(.0659341,.032967){13}{\line(1,0){.0659341}}
\multiput(4.073,14.501)(.0659341,.032967){13}{\line(1,0){.0659341}}
\multiput(5.787,15.358)(.0659341,.032967){13}{\line(1,0){.0659341}}
\multiput(7.501,16.215)(.0659341,.032967){13}{\line(1,0){.0659341}}
\multiput(9.215,17.073)(.0659341,.032967){13}{\line(1,0){.0659341}}
\multiput(10.93,17.93)(.0659341,.032967){13}{\line(1,0){.0659341}}
\multiput(12.644,18.787)(.0659341,.032967){13}{\line(1,0){.0659341}}
\multiput(14.358,19.644)(.0659341,.032967){13}{\line(1,0){.0659341}}
\multiput(16.073,20.501)(.0659341,.032967){13}{\line(1,0){.0659341}}
\multiput(17.787,21.358)(.0659341,.032967){13}{\line(1,0){.0659341}}
\multiput(19.501,22.215)(.0659341,.032967){13}{\line(1,0){.0659341}}
\multiput(21.215,23.073)(.0659341,.032967){13}{\line(1,0){.0659341}}
\multiput(22.93,23.93)(.0659341,.032967){13}{\line(1,0){.0659341}}
\multiput(24.644,24.787)(.0659341,.032967){13}{\line(1,0){.0659341}}
\multiput(26.358,25.644)(.0659341,.032967){13}{\line(1,0){.0659341}}
\multiput(28.073,26.501)(.0659341,.032967){13}{\line(1,0){.0659341}}
\multiput(29.787,27.358)(.0659341,.032967){13}{\line(1,0){.0659341}}
\multiput(31.501,28.215)(.0659341,.032967){13}{\line(1,0){.0659341}}
\multiput(33.215,29.073)(.0659341,.032967){13}{\line(1,0){.0659341}}
\put(5,5){\makebox(0,0)[cc]{$(0,0)$}}
\put(65,55){\makebox(0,0)[cc]{$(5,4)$}}
\put(15,55){\makebox(0,0)[cc]{$(1,4)$}}
\put(55,5){\makebox(0,0)[cc]{$(4,0)$}}
\end{picture}
\end{center}
\caption{A vertex $p=\{ u_\Box,u_{\kr}\}$ from $P^{**}$,
its four neighbors in $H$,
and some part of $G$.
For the illustrated vertices of $G$,
we use the indicated coordinates,
in particular, 
$u_\Box$ corresponds to $(2,2)$
and 
$u_{\kr}$ corresponds to $(3,2)$.}\label{fig6}
\end{figure}
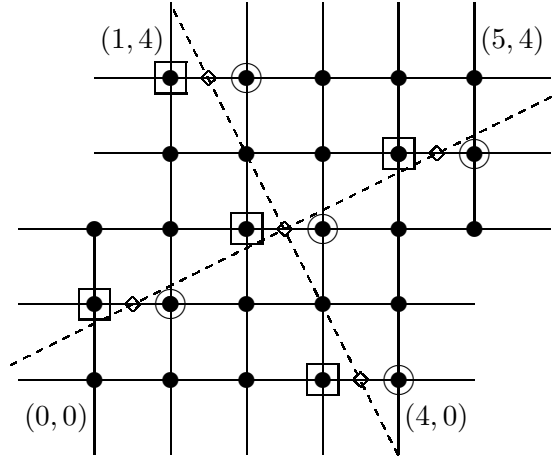
Our next goal is the following.

\begin{lemma}\label{lemma2}
$D^*$ contains an inefficient vertex $u$ 
with ${\rm dist}_G(u,u_\Box)\leq 3$.
\end{lemma}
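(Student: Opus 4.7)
The plan is to perform a careful case analysis on the relative position of $p'$ among the four $H$-neighbors of $p$ and on the type (right, $0$, or $2$) of $p'$. In every case I track a small number of vertices in the $\mathbb{Z}^2$-neighborhood of $u_\Box$ whose ``canonical'' dominators (the pair-vertices of $D_\Box$ or $D_\kr$) have been excluded from $D^*$, and I exhibit two vertices of $D^*$ at mutual $G$-distance at most $2$ with at least one of them lying within $G$-distance $3$ of $u_\Box$.

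To illustrate, suppose WLOG (by the symmetries of the tilted grid $H$) that $p'$ is the ``northeast'' $H$-neighbor of $p$, with $u'_\Box = u_\Box+(2,1)$ and $u'_\kr = u_\Box+(3,1)$. If $p'$ has type $2$, then $u'_\Box$ and $u'_\kr$ are $G$-adjacent and both lie in $D^*$, so $u'_\Box$ is inefficient and lies at $G$-distance $3$ from $u_\Box$. If $p'$ has type right or type $0$, I consider the vertex $w := u_\Box+(1,1)$. Its two pair-neighbors in $\mathbb{Z}^2$ are $u_\kr$ and $u'_\Box$, neither of which is in $D^*$, so $w$ must be dominated by one of the three non-pair candidates $u_\Box+(0,1)$, $u_\Box+(1,1)$, or $u_\Box+(1,2)$. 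The first two candidates lie at $G$-distance at most $2$ from $u_\Box$ and hence make $u_\Box$ itself inefficient; in the remaining subcase $u_\Box+(1,2)\in D^*$, and I apply the same reasoning at the NW $H$-neighbor pair $\{u_\Box+(-1,2),u_\Box+(0,2)\}$ to produce either a $D^*$ vertex within $G$-distance $2$ of $u_\Box+(1,2)$, or---if that pair has type $0$---a forced non-pair dominator of $u_\Box+(-1,2)$ within $G$-distance $2$ of $u_\Box$.

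The three remaining directions for $p'$ are handled analogously. The NW and SW cases are actually easier, since in those directions $u'_\kr$ is itself within $G$-distance $2$ of $u_\Box$, so types right and $2$ immediately make $u_\Box$ inefficient, and only the type-$0$ subcase needs an argument parallel to the one above. The main obstacle is the bookkeeping in the nested subcase where the forced non-pair dominator is pushed to $G$-distance exactly $3$ from $u_\Box$: there one must iterate at most once more, invoking domination constraints on pair-vertices just beyond the first ring, and ensure every branch terminates with an inefficient pair of $D^*$ vertices within the radius-$3$ ball around $u_\Box$. Since this ball contains only $O(1)$ vertices, the analysis is finite and can be carried out mechanically.
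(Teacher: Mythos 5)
Your overall plan is the same as the paper's: a finite case analysis around $p$ that tracks which canonical dominators (the nearby vertices of $D_\Box\cup D_{\kr}$) have been excluded from $D^*$, and follows the forced replacement dominators until two elements of $D^*$ come within distance two of each other. The paper organizes this into six cases according to which of the four neighbouring $\Box$-vertices lie in $D^*$. However, your sketch contains a false step exactly where the argument becomes delicate. In your illustrated subcase ($p'$ of type right or $0$ at the NE position, the dominator of $u_\Box+(1,1)$ forced to be $u_\Box+(1,2)$, and the NW pair of type $0$) you claim that the forced non-pair dominator of $u_\Box+(-1,2)$ lies within $G$-distance $2$ of $u_\Box$. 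It need not: the closed neighbourhood of $u_\Box+(-1,2)$ meets $D_\Box\cup D_{\kr}$ only in $u_\Box+(-1,2)$ and $u_\Box+(0,2)$, both excluded, so the dominator is one of $u_\Box+(-1,1)$, $u_\Box+(-2,2)$, $u_\Box+(-1,3)$; the last two are at distance $4$ from $u_\Box$, and nothing you have established forces them to be inefficient. ``Iterating once more'' from there only moves farther from $u_\Box$, and you give no reason why the chain of forced dominators ever re-enters the radius-$3$ ball.

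The ingredient you are missing is the other half of the definition of $P^{**}$: $p$ also has a neighbour of type left, so one of the four neighbouring $\Box$-vertices \emph{is} in $D^*$. Your stuck subcase is precisely the situation of the paper's Cases 4--6, each of which closes by playing the forced dominators off against that known element of $D^*$; for instance, in the paper's coordinates of Figure \ref{fig6}, Case 4 ends by dominating $N_G[(2,4)]$, where the candidate $(3,4)=u_\Box+(1,2)$ is inefficient only because $(4,3)=u_\Box+(2,1)$ is assumed to lie in $D^*$ there. Since your proposal never invokes the type-left neighbour, it is in effect attempting a statement stronger than Lemma \ref{lemma2}, and the forced-dominator chain has no anchor to return to. A secondary point: placing $p'$ at the NE position is not a genuine ``WLOG'', because the orientation $u_{\kr}=u_\Box+(1,0)$ of the pairs breaks the dihedral symmetry of $\mathbb{Z}^2$ (as you implicitly concede when noting that the NW and SW directions behave differently), so all four positions need separate, if routine, verification.
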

\begin{proof}
If $p$ has a neighbor in $P(2)$,
the statement is trivial.
Hence, we may assume that no neighbor of $p$ is in $P(2)$.
Hence, by construction, 
$p$ has a neighbor $p'=\{ u'_\Box,u'_{\kr}\}$ from $P({\rm left})$
as well as a neighbor 
$p''=\{ u''_\Box,u''_{\kr}\}$ from $P({\rm right})\cup P(0)$,
in particular, 
$u'_\Box\in D^*$
and 
$u''_\Box\not\in D^*$.
We consider several cases,
where we denote the vertices using coordinates 
as explained in Figure \ref{fig6}.

\medskip

\noindent {\bf Case 1} {\it $(1,4)\in D^*$ and $(4,3)\not\in D^*$.}

\medskip

\noindent $D^*$ contains a vertex $u$ from $N_G[(3,3)]\setminus \{ (4,3)\}$,
which is necessarily inefficient.

\medskip

\noindent {\bf Case 2} {\it $(1,4),(4,3)\in D^*$ and
$(3,0)\not\in D^*$.}

\medskip

\noindent $D^*$ contains a vertex $u$ from $N_G[(3,1)]\setminus \{ (3,0)\}$, which is necessarily inefficient.

\medskip

\noindent {\bf Case 3} {\it $(1,4),(4,3),(3,0)\in D^*$ and
$(0,1)\not\in D^*$.}

\medskip

\noindent $D^*$ contains a vertex $u$ from $N_G[(1,1)]\setminus \{ (0,1)\}$, which is necessarily inefficient.

\medskip

\noindent {\bf Case 4} {\it $(1,4)\not\in D^*$ and $(4,3)\in D^*$.}

\medskip

\noindent $D^*$ contains a vertex from $N_G[(1,3)]$.
In view of the desired results,
we may assume that $D^*$ contains $(0,3)$.
$D^*$ contains a vertex from $N_G[(1,4)]$.
In view of the desired results,
we may assume that $D^*$ contains $(1,5)$.
$D^*$ contains a vertex $u$ from $N_G[(2,4)]$, 
which is now necessarily inefficient.

\medskip

\noindent {\bf Case 5} {\it $(1,4),(4,3)\not\in D^*$ and 
$(3,0)\in D^*$.}

\medskip

\noindent $D^*$ contains a vertex from $N_G[(4,2)]$.
In view of the desired results,
we may assume that $D^*$ contains $(5,2)$.
$D^*$ contains a vertex from $N_G[(4,3)]$.
In view of the desired results,
we may assume that $D^*$ contains $(4,4)$.
$D^*$ contains a vertex $u$ from $N_G[(3,3)]$, 
which is now necessarily inefficient.

\medskip

\noindent {\bf Case 6} {\it $(1,4),(4,3),(3,0)\not\in D^*$ and 
$(0,1)\in D^*$.}

\medskip

\noindent $D^*$ contains a vertex from $N_G[(2,0)]$.
In view of the desired results,
we may assume that $D^*$ contains $(2,-1)$.
$D^*$ contains a vertex $u$ from $N_G[(1,0)]$, 
which is now necessarily inefficient.

\medskip

\noindent The considered cases exhaust all relevant situations,
which completes the proof.
\end{proof}
Since there are 
$13$ vertices at distance at most $2$ 
as well as 
$25$ vertices at distance at most $3$ 
from every vertex of $G$,
Lemma \ref{lemma1} 
and 
Lemma \ref{lemma2}
imply that $D^*$ contains at least
$\frac{k}{20\cdot 25}+O(1)=\frac{k}{500}+O(1)$
inefficient vertices,
and that there is a set $I$ of at least 
$\frac{k}{13\cdot 500}+O(1)=\frac{k}{12500}+O(1)$
inefficient vertices from $D^*$
such that every two vertices in $I$ have pairwise distance at least $3$.
Now, double counting the number $d$ of pairs 
$(u,v)$ with $u\in D^*$ and $v\in N_G[u]$, we obtain 
$n(G)+|I|\leq d=5|D^*|$.
This implies
\begin{eqnarray*}
|D^*| & \geq & \frac{n(G)+|I|}{5}
= k^2+\frac{|I|}{5}
\geq k^2+\frac{k}{62500}+O(1),
\end{eqnarray*}
which completes the proof of Theorem \ref{theorem1}. $\Box$

\section{Proof of Theorem \ref{theorem2}}

Let ${\cal G}$, $\alpha$, $G$, as well as $D$ and $D'$ 
be as in the statement of the theorem.
If $D''$ is a dominating set in $G$,
$D\stackrel{k}{\longleftrightarrow} D''$, and
$D'\stackrel{k}{\longleftrightarrow} D''$,
then 
$D\stackrel{k}{\longleftrightarrow} D'$.
This implies that we may assume that $D'$ 
is a minimum dominating set in $G$.
Since ${\cal G}$ is hereditary,
there is a positive constant $c_2$
such that 
every induced subgraph $G'$ of $G$
has a balanced separator of order $c_2n(G')^\alpha$.
Recursively removing balanced separators,
it follows that there is 
a full binary tree $T$ with root $r$ 
as well as a set $V(t)$ of vertices of $G$ for every vertex $t$ of $T$
such that 
\begin{enumerate}[(i)]
\item $\big(V(t)\big)_{t\in V(T)}$ is a partition of $V(G)$,
\item if $t$ is not a leaf and has depth $d$, 
then $|V(t)|\leq c_2\left(\frac{2}{3}\right)^{\alpha d}n^\alpha$,
\item if $t$ is a leaf, then 
$|V(t)|\leq n^\alpha$, and
\item if $t$ and $t'$ are two distinct vertices of $T$
such that $G$ has an edge between $V(t)$ and $V(t')$,
then 
either $t$ is an ancestor of $t'$ 
or $t'$ is an ancestor of $t$.
\end{enumerate}
Allowing $V(t)$ to be empty, we may assume that all leaves of $T$
have the same depth.
Note that (iii) implies that the depth of $T$ is $\Omega(\log n)$,
and that the important property (iv) 
is a consequence of the definition of (balanced) separators.

For every vertex $t$ of $T$ that is not a leaf, 
we label the two edges between $t$ and its two children
arbitrarily by $0$ (corresponding to left) 
and $1$ (corresponding to right).
For a set $S$ of vertices of $T$ for which $T[S]$ is connected,
let $L_T(S)$ be the set of vertices $t$ of $T$ 
such that there is a path $s_0s_1\ldots s_\ell$ in $T$ with 
$\ell\geq 1$,
$s_0\in S$, 
$s_i\not\in S$ for every $i\in [\ell]$,
$t=s_\ell$,
$s_i$ is a child of $s_{i-1}$ for every $i\in [\ell]$, and 
the edge $s_0s_1$ has label $0$.
Define $R_T(S)$ similarly requiring label $1$ instead of $0$
for the edge $s_0s_1$.
See Figure \ref{fig7}, 
where these definitions are illustrated 
for a root-to-leaf path in $T$.

\begin{figure}[h]
\begin{center}
\unitlength 1mm 
\linethickness{0.4pt}
\ifx\plotpoint\undefined\newsavebox{\plotpoint}\fi 
\begin{picture}(73,119)(0,0)
\put(29,1){\circle*{2}}
\put(34,86){\circle*{2}}
\put(54,106){\circle*{2}}
\put(44,96){\circle*{2}}
\put(64,116){\circle*{2}}
\put(19,11){\circle*{2}}
\put(34,66){\circle*{2}}
\put(9,21){\circle*{2}}
\put(24,76){\circle*{2}}
\put(29,1){\line(-1,1){20}}
\put(44,56){\line(-1,1){20}}
\put(24,76){\line(1,1){20}}
\put(44,96){\line(1,1){20}}
\put(44,56){\circle*{2}}
\put(44,56){\line(-1,-1){35}}
\put(9,21){\line(0,1){0}}
\put(24,76){\line(0,1){0}}
\put(19,11){\line(-1,-1){5}}
\put(34,66){\line(-1,-1){5}}
\put(9,21){\line(-1,-1){5}}
\put(24,76){\line(-1,-1){5}}
\put(44,56){\line(1,-1){5}}
\put(34,86){\line(1,-1){5}}
\put(54,106){\line(1,-1){5}}
\put(44,96){\line(1,-1){5}}
\put(64,116){\line(1,-1){5}}
\put(0,14){\line(0,1){4}}
\put(3,21){\line(1,1){35}}
\put(38,56){\line(-1,1){19}}
\multiput(19,75)(-.0336879433,-.0833333333){564}{\line(0,-1){.0833333333}}
\put(0,28){\line(0,-1){11}}
\put(64,119){\makebox(0,0)[cc]{$r=t_0$}}
\put(54,109){\makebox(0,0)[cc]{$t_1$}}
\put(44,99){\makebox(0,0)[cc]{$t_2$}}
\put(34,89){\makebox(0,0)[cc]{$t_3$}}
\put(24,79){\makebox(0,0)[cc]{$t_4$}}
\put(34,69){\makebox(0,0)[cc]{$t_5$}}
\put(44,59){\makebox(0,0)[cc]{$t_6$}}
\put(13,21){\makebox(0,0)[cc]{$t_7$}}
\put(21,13){\makebox(0,0)[cc]{$t_8$}}
\put(31,3){\makebox(0,0)[cc]{$t_9$}}
\put(35,82){\line(1,1){34}}
\put(32,103){\makebox(0,0)[cc]{$P$}}
\put(58,83){\makebox(0,0)[cc]{$R_T(V(P))$}}
\put(21,50){\makebox(0,0)[cc]{$L_T(V(P))$}}
{\tiny
\put(28,82){\makebox(0,0)[cc]{$0$}}
\put(38,92){\makebox(0,0)[cc]{$0$}}
\put(48,102){\makebox(0,0)[cc]{$0$}}
\put(58,112){\makebox(0,0)[cc]{$0$}}
\put(25,39){\makebox(0,0)[cc]{$0$}}
\put(40,63){\makebox(0,0)[cc]{$1$}}
\put(30,73){\makebox(0,0)[cc]{$1$}}
\put(25,7){\makebox(0,0)[cc]{$1$}}
\put(15,17){\makebox(0,0)[cc]{$1$}}
}
\put(69,116){\line(1,-1){4}}
\put(35,82){\line(1,-1){20}}
\put(55,62){\line(-1,-1){12}}
\put(31,-4){\line(0,1){0}}
\put(3,21){\line(1,-1){16}}
\put(0,14){\line(1,-1){14}}
\put(14,0){\line(1,1){5}}
\put(43,50){\line(1,-1){6}}
\put(49,44){\line(1,1){17}}
\multiput(73,112)(-.033653846,-.245192308){208}{\line(0,-1){.245192308}}
\end{picture}
\end{center}
\caption{A root-to-leaf path $P:t_0t_1\ldots t_9$ and 
the two sets $L_T(V(P))$ and $R_T(V(P))$.}
\label{fig7}
\end{figure}
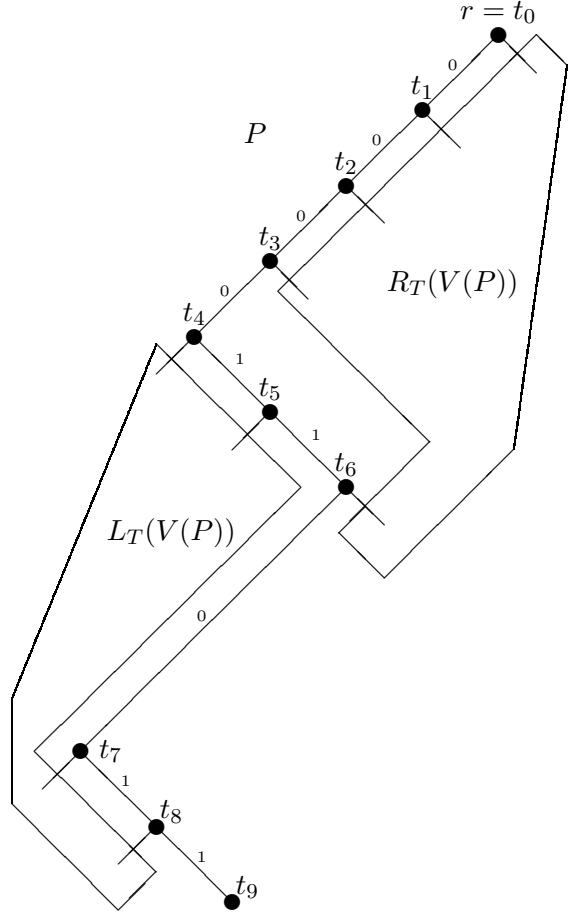
Let 
$$
V_G(S)=\bigcup_{t\in S}V(t),\,\,\,\,\,\,\,\,\,\,\,
L_G(S)=\bigcup_{t\in L_T(S)}V(t),
\,\,\,\,\,\,\,\,\,\,\,\mbox{ and }\,\,\,\,\,\,\,\,\,\,\,
R_G(S)=\bigcup_{t\in R_T(S)}V(t).$$
Note that, if $S$ contains the root of $T$, then 
\begin{itemize}
\item the sets $S$, $L_T(S)$, and $R_T(S)$ partition $V(T)$,
and 
\item the sets $V_G(S)$, 
$L_G(S)$,
and 
$R_G(S)$ partition $V(G)$,
and there is no edge between 
$L_G(S)$ and $R_G(S)$.
\end{itemize}
If $P:t_0t_1\ldots t_d$ is a root-to-leaf path in $T$ 
with $r=t_0$, 
then the above properties imply that the set
$$D(P)=
V_G(V(P))\cup \Big(L_G(V(P))\cap D'\Big)
\cup \Big(R_G(V(P))\cap D\Big)$$
is a dominating set in $G$.
Note, for instance, that all neighbors of vertices from 
$L_G(V(P))$
outside of 
$L_G(V(P))$
lie in 
$V_G(V(P))$.
We call $D(P)$ the {\it special dominating set associated to $P$}.
Note that 
\begin{eqnarray}
\left|V_G(V(P))\right| & \stackrel{(ii),(iii)}{\leq} &
\sum_{i\in [\ell-1]_0}c_2\left(\frac{2}{3}\right)^{\alpha i}n^\alpha
+n^\alpha\nonumber\\
&\leq &\left(\frac{c_2}{1-\left(\frac{2}{3}\right)^{\alpha}}+1\right)n^\alpha
=c_3n^\alpha,
\,\,\,\,\,\,\,\,\mbox{ where $c_3=\left(\frac{c_2}{1-\left(\frac{2}{3}\right)^{\alpha}}+1\right)$.}\label{epath}
\end{eqnarray}
The following lemma contains the important observation 
that $D(P)$ is not much larger than $D$.

\begin{lemma}\label{lemma3}
$|D(P)|\leq |D|+2c_3n^\alpha$.
\end{lemma}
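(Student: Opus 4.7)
The plan is to directly compare $|D(P)|$ with $|D|$ by decomposing both cardinalities according to the three-way partition $V(G) = V_G(V(P)) \cup L_G(V(P)) \cup R_G(V(P))$ induced by the root-to-leaf path $P$. Since $D(P) = V_G(V(P)) \cup (L_G(V(P)) \cap D') \cup (R_G(V(P)) \cap D)$ is a disjoint union, subtracting $|D|$ written in the same way gives the telescoping identity
$$|D(P)| - |D| = \bigl(|V_G(V(P))| - |D \cap V_G(V(P))|\bigr) + \bigl(|D' \cap L_G(V(P))| - |D \cap L_G(V(P))|\bigr),$$
because the $R_G(V(P))$-contributions cancel exactly. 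The first bracketed term is trivially at most $|V_G(V(P))| \leq c_3 n^\alpha$ by (\ref{epath}), so the entire task reduces to showing that $|D' \cap L_G(V(P))| - |D \cap L_G(V(P))| \leq c_3 n^\alpha$.

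For this second bound, I will run an exchange argument that uses the assumed minimality of $D'$. Consider the auxiliary set
$$D'' = \bigl(D \cap L_G(V(P))\bigr) \cup V_G(V(P)) \cup \bigl(D' \cap R_G(V(P))\bigr),$$
which keeps $V_G(V(P))$ as a buffer and uses the $D$-part on the left and the $D'$-part on the right. The key step — which is where property (iv) of the recursive-separator tree enters — is checking that $D''$ is a dominating set: a vertex $v \in L_G(V(P))$ has $N_G[v] \subseteq L_G(V(P)) \cup V_G(V(P))$ since there are no edges from $L_G(V(P))$ to $R_G(V(P))$, so any vertex of $D$ dominating $v$ lies in $D \cap L_G(V(P))$ or in $V_G(V(P))$, both of which are contained in $D''$; symmetrically every vertex of $R_G(V(P))$ is dominated through $D'$; and vertices of $V_G(V(P))$ lie in $D''$ themselves.

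By minimality of $D'$ the inequality $|D''| \geq |D'|$ then holds. Expanding both sides as disjoint unions and cancelling the common $|D' \cap R_G(V(P))|$ term yields
$$|D \cap L_G(V(P))| + |V_G(V(P))| \;\geq\; |D' \cap V_G(V(P))| + |D' \cap L_G(V(P))|,$$
so $|D' \cap L_G(V(P))| - |D \cap L_G(V(P))| \leq |V_G(V(P))| \leq c_3 n^\alpha$. Adding this to the first bound gives $|D(P)| - |D| \leq 2 c_3 n^\alpha$ as required. The only real obstacle is the verification that $D''$ dominates $G$, and that rests entirely on the "no crossing edges" property (iv) of the separator decomposition; everything else is bookkeeping on a three-set partition.
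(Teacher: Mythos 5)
Your proposal is correct and follows essentially the same route as the paper: both arguments hinge on the auxiliary set $D''=V_G(V(P))\cup\bigl(L_G(V(P))\cap D\bigr)\cup\bigl(R_G(V(P))\cap D'\bigr)$, its domination property via the absence of edges between $L_G(V(P))$ and $R_G(V(P))$, and the minimality of $D'$. The only difference is presentational --- you derive the bound directly from $|D''|\geq|D'|$, whereas the paper phrases the same comparison as a proof by contradiction.
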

\begin{proof}
Suppose, for a contradiction, that 
$|D(P)|>|D|+2c_3n^\alpha$.

By the definition of $D(P)$, we obtain 
\begin{eqnarray*}
c_3n^\alpha+\Big|L_G(V(P))\cap D'\Big|
&+&\Big|R_G(V(P))\cap D\Big|\\
& \geq & |D(P)|\\
& >& |D|+2c_3n^\alpha\\
& \geq & \Big|L_G(V(P))\cap D\Big|
+\Big|R_G(V(P))\cap D\Big|+2c_3n^\alpha,
\end{eqnarray*}
and, hence, 
$$\Big|L_G(V(P))\cap D'\Big|
>\Big|L_G(V(P))\cap D\Big|+c_3n^\alpha.$$
Now,
$$D''=V_G(V(P))
\cup \Big(L_G(V(P))\cap D\Big)
\cup \Big(R_G(V(P))\cap D'\Big)$$
is a dominating set in $G$ with 
\begin{eqnarray*}
|D''| &\leq &
c_3n^\alpha+\Big|L_G(V(P))\cap D\Big|
+\Big|R_G(V(P))\cap D'\Big|\\
& < & \Big|L_G(V(P))\cap D'\Big|
+\Big|R_G(V(P))\cap D'\Big|\\
& \leq & |D'|,
\end{eqnarray*}
contradicting the fact that $D'$ is a minimum dominating set.
This completes the proof.
\end{proof}
In order to prove the theorem,
we will construct a sequence 
$D_0,D_1,\ldots,D_\ell$ 
of dominating sets in $G$ such that 
$D=D_0$, 
$D'=D_\ell$, 
$D_{i-1}$ is adjacent to $D_i$ for every $i\in [\ell]$, and, 
for every $i\in [\ell]_0$, there is some $j\in [\ell]_0$ with
\begin{itemize}
\item $|j-i|\leq 2c_3n^\alpha$ and
\item $D_j$ is a special dominating set associated to some
root-to-leaf path in $T$. 
\end{itemize}
Together with Lemma \ref{lemma3},
this implies
$$|D_i|\leq |D|+4c_3n^\alpha\mbox{ for every $i\in [\ell]_0$},$$
which completes the proof.

For an edge $e$ of $T$, let $\sigma(e)\in \{ 0,1\}$ be its label.
For a root-to-leaf path $P:t_0t_1\ldots t_d$ in $T$ with $r=t_0$,
let
$$\sigma(P)=\Big(\sigma(t_0t_1),
\sigma(t_1t_2),\ldots,\sigma(t_{d-1}t_d)\Big).$$
We consider the root-to-leaf paths $P$ 
according to the lexicographic order of the $\sigma(P)$'s,
that is, $P$ comes immediately before $P'$
if there is some $i\in [d-1]$ such that
\begin{eqnarray*}
\sigma(P)&=&\Big(\sigma(t_0t_1),\ldots,\sigma(t_{i-1}t_i),0,1,\ldots,1\Big)
\mbox{ and }\\
\sigma(P')&=&\Big(\sigma(t_0t_1),\ldots,\sigma(t_{i-1}t_i),1,0,\ldots,0\Big).
\end{eqnarray*}
See Figure \ref{fig8} for an illustration.

\begin{figure}[h]
\begin{center}
\unitlength 1mm 
\linethickness{0.4pt}
\ifx\plotpoint\undefined\newsavebox{\plotpoint}\fi 
\begin{picture}(88,119)(0,0)
\put(29,1){\circle*{2}}
\put(59,1){\circle*{2}}
\put(34,86){\circle*{2}}
\put(54,106){\circle*{2}}
\put(44,96){\circle*{2}}
\put(64,116){\circle*{2}}
\put(19,11){\circle*{2}}
\put(69,11){\circle*{2}}
\put(34,66){\circle*{2}}
\put(9,21){\circle*{2}}
\put(79,21){\circle*{2}}
\put(24,76){\circle*{2}}
\put(29,1){\line(-1,1){20}}
\put(59,1){\line(1,1){20}}
\put(44,56){\line(-1,1){20}}
\put(24,76){\line(1,1){20}}
\put(44,96){\line(1,1){20}}
\put(44,56){\circle*{2}}
\put(44,56){\line(-1,-1){35}}
\put(44,56){\line(1,-1){35}}
\put(9,21){\line(0,1){0}}
\put(79,21){\line(0,1){0}}
\put(24,76){\line(0,1){0}}
\put(19,11){\line(-1,-1){5}}
\put(69,11){\line(1,-1){5}}
\put(34,66){\line(-1,-1){5}}
\put(9,21){\line(-1,-1){5}}
\put(79,21){\line(1,-1){5}}
\put(24,76){\line(-1,-1){5}}
\put(44,56){\line(1,-1){5}}
\put(34,86){\line(1,-1){5}}
\put(54,106){\line(1,-1){5}}
\put(44,96){\line(1,-1){5}}
\put(64,116){\line(1,-1){5}}
\put(0,14){\line(0,1){4}}
\put(3,21){\line(1,1){35}}
\put(38,56){\line(-1,1){19}}
\multiput(19,75)(-.0336879433,-.0833333333){564}{\line(0,-1){.0833333333}}
\put(0,28){\line(0,-1){11}}
\put(88,28){\line(0,-1){11}}
\put(64,119){\makebox(0,0)[cc]{$r=t_0$}}
\put(54,109){\makebox(0,0)[cc]{$t_1$}}
\put(44,99){\makebox(0,0)[cc]{$t_2$}}
\put(34,89){\makebox(0,0)[cc]{$t_3$}}
\put(24,79){\makebox(0,0)[cc]{$t_4$}}
\put(34,69){\makebox(0,0)[cc]{$t_5$}}
\put(44,59){\makebox(0,0)[cc]{$t_6$}}
\put(13,21){\makebox(0,0)[cc]{$t_7$}}
\put(75,21){\makebox(0,0)[cc]{$t'_7$}}
\put(21,13){\makebox(0,0)[cc]{$t_8$}}
\put(66,13){\makebox(0,0)[cc]{$t'_8$}}
\put(31,3){\makebox(0,0)[cc]{$t_9$}}
\put(56,3){\makebox(0,0)[cc]{$t'_9$}}
\put(35,82){\line(1,1){34}}
\put(58,83){\makebox(0,0)[cc]{$R_T(V(P'))$}}
\put(21,50){\makebox(0,0)[cc]{$L_T(V(P))$}}
{\tiny
\put(28,82){\makebox(0,0)[cc]{$0$}}
\put(38,92){\makebox(0,0)[cc]{$0$}}
\put(48,102){\makebox(0,0)[cc]{$0$}}
\put(58,112){\makebox(0,0)[cc]{$0$}}
\put(25,39){\makebox(0,0)[cc]{$0$}}
\put(63,39){\makebox(0,0)[cc]{$1$}}
\put(40,63){\makebox(0,0)[cc]{$1$}}
\put(30,73){\makebox(0,0)[cc]{$1$}}
\put(25,7){\makebox(0,0)[cc]{$1$}}
\put(63,7){\makebox(0,0)[cc]{$0$}}
\put(15,17){\makebox(0,0)[cc]{$1$}}
\put(73,17){\makebox(0,0)[cc]{$0$}}
}
\put(69,116){\line(1,-1){4}}
\put(31,-4){\line(0,1){0}}
\put(3,21){\line(1,-1){16}}
\put(85,21){\line(-1,-1){16}}
\put(0,14){\line(1,-1){14}}
\put(88,14){\line(-1,-1){14}}
\put(14,0){\line(1,1){5}}
\put(74,0){\line(-1,1){5}}
\multiput(73,112)(.0337078652,-.1887640449){445}{\line(0,-1){.1887640449}}
\multiput(35,82)(.03373819163,-.04116059379){1482}{\line(0,-1){.04116059379}}
\put(88,17){\line(0,-1){3}}
\end{picture}
\end{center}
\caption{The two paths $P:t_0t_1\ldots t_6t_7t_8t_9$ and 
$P':t_0t_1\ldots t_6t_7't_8't_9'$ are lexicographically consecutive.}
\label{fig8}
\end{figure}
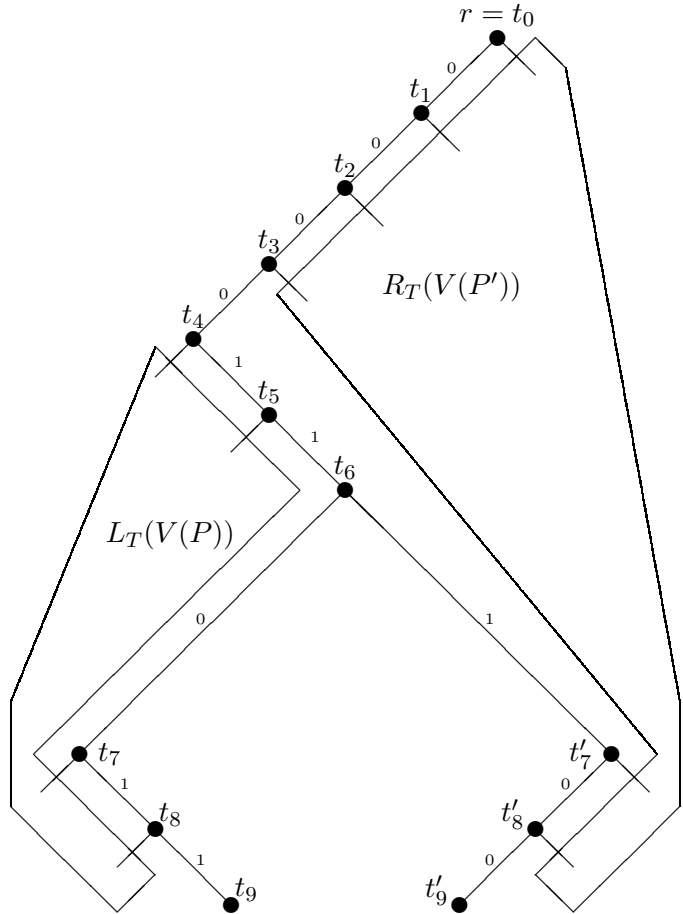
Let 
$$P(0,\ldots,0,0),P(0,\ldots,0,1),P(0,\ldots,1,0),P(0,\ldots,1,1),\ldots,
P(1,\ldots,1,0),
P(1,\ldots,1,1)$$
be this order
of the root-to-leaf paths in $T$.

Now, we describe how to construct the sequence $D_0,\ldots,D_\ell$
mentioned after Lemma \ref{lemma3}.
Starting with $D_0=D$, we first transform $D$ into $D(P(0,\ldots,0,0))$,
where $P(0,\ldots,0,0):t_0t_1\ldots t_d$ as follows:

\begin{algorithm}[H]
\For{$i=0$ \KwTo $d$}
{
Add the vertices from $V(t_i)\setminus D$ 
one by one to the current dominating set\;
}
\end{algorithm}
Since 
$\left|V_G(V(P))\right|\stackrel{(\ref{epath})}{\leq} c_3n^\alpha$,
this requires at most $c_3n^\alpha$ steps.
Symmetrically, the set $D(P(1,\ldots,1,1))$
can be transformed into $D'$
by a sequence of at most $c_3n^\alpha$ steps.
In order to complete the proof,
we explain how to transform $D(P)$ into $D(P')$,
where 
$P:t_0t_1\ldots t_it_{i+1}\ldots t_d$,
$P':t_0t_1\ldots t_it'_{i+1}\ldots t'_d$,
and $P$ comes immediately before $P'$ 
in the lexicographic order:

\begin{algorithm}[H]
\For{$j=d$ \KwTo $i+1$}
{
Remove from the current dominating set
one by one 
the vertices from $V(t_j)\setminus D'$\;
}
\For{$j=i+1$ \KwTo $d$}
{
Add the vertices from $V(t'_j)\setminus D$
one by one to the current dominating set\;
}
\end{algorithm}
Since 
$$\left|\bigcup_{j\in \{ i+1,\ldots,d\}}V(t_j)\right|+
\left|\bigcup_{j\in \{ i+1,\ldots,d\}}V(t_j')\right|
\leq 
\left|V_G(V(P))\right|+\left|V_G(V(P'))\right|
\stackrel{(\ref{epath})}{\leq} 2c_3n^\alpha,$$
this requires at most $2c_3n^\alpha$ steps.
It is easy to see that all intermediate sets 
during these transformations are dominating 
sets in $G$, which completes the proof
of Theorem \ref{theorem2}. $\Box$

\end{document}